\newcommand{\seq}[1]{{\left\langle{#1}\right\rangle}}
\renewcommand \leq {\leqslant}
\renewcommand \le {\leqslant}
\renewcommand \ge {\geqslant}
\newcommand{\converge}{\!\!\downarrow}
\renewcommand \phi {\varphi}
\newcommand{\cat}{\widehat{\phantom{\alpha}}}
\newcommand{\abar}{\bar{a}}
\newcommand{\bbar}{\bar{b}}
\newcommand{\cbar}{\bar{c}}
\newcommand{\dbar}{\bar{d}}
\newcommand{\A}{\mathcal{A}}
\newcommand{\B}{\mathcal{B}}
\newcommand{\C}{\mathcal{C}}
\newcommand{\M}{\mathcal{M}}
\newcommand{\Z}{\mathbb{Z}}
\newcommand{\zero}{{\mathbf 0}}
\newcommand{\zeroj}{{\mathbf 0}'}
\newcommand{\zerojj}{{\mathbf 0}''}
\newcommand{\zeron}{{\mathbf 0^{(n)}}}
\newcommand{\degd}{{\mathbf d}}
\newcommand{\wock}{\omega_1^{\textup{ck}}}
\newcommand{\outcome}{\texttt{outcome}}
\DeclareMathOperator \dom{dom}
\theoremstyle{plain}
\newtheorem{theorem}{Theorem}
\newtheorem{proposition}[theorem]{Proposition}
\newtheorem{corollary}[theorem]{Corollary}
\newcounter{claimCounter}[theorem]
\newtheorem{claim}[claimCounter]{Claim}
\newcounter{subClaimCounter}[claimCounter]
\newtheorem{subclaim}[subClaimCounter]{Claim}
\theoremstyle{definition}
\begin{document}

\title[Coding in the aut.\ group of a comp.\ cat.\ structure]{Coding in the automorphism group of a computably categorical structure}
\author{Dan Turetsky}
\address{School of Mathematics and Statistics\\ Victoria University of Wellington\\ Wellington, New Zealand}
\email{dan.turetsky@vuw.ac.nz}

\begin{abstract}
Using new techniques for controlling the categoricity spectrum of a structure, we construct a structure with degree of categoricity but infinite spectral dimension, answering a question of Bazhenov, Kalimullin and Yamaleev.  Using the same techniques, we construct a computably categorical structure of non-computable Scott rank.
\end{abstract}

\maketitle

\section{Introduction}

Isomorphism problems for computable structures are one of the main topics in computable model theory.  This is a broadly defined topic concerned with the complexity of isomorphisms between isomorphic computable structures.  Recent work has focused on degrees of categoricity and of isomorphism~\cite{FoKaMi10,CsFrSh13,AnCs16,CsHa17,Fr17,BaKaYa18}, and lowness for categoricity and for isomorphism~\cite{AnCs16,FrSo14,Fr17,FrTu18,McSt18}.  We refer the reader to the text by Ash and Knight~\cite{AshKnight} for background on computable model theory and computable ordinals.

Given isomorphic computable structures $\A$ and $\B$, the {\em degree of isomorphism} between $\A$ and $\B$ is the least Turing degree $\degd$ which can compute an isomorphism between $\A$ and $\B$, if such a degree exists.  In this case, every isomorphism between $\A$ and $\B$ computes $\degd$, so we can think of $\degd$ as being coded into the isomorphism problem of $\A$ and $\B$.

A related notion is the degree of categoricity.  Given a computable structure $\A$, the {\em degree of categoricity} of $\A$ is the least degree which computes an isomorphism between any two computable presentations of $\A$, if such a degree exists.  Fokina, Kalimullin and R.\ Miller introduced this notion and showed that every degree which is d.c.e.\ in and above some $\zeron$ is the degree of categoricity of some computable structure~\cite{FoKaMi10}.  Csima, Franklin and Shore extended this to every degree which is d.c.e.\ in and above some $\zero^{(\alpha+1)}$~\cite{CsFrSh13}.  Recently, Csima and Ng have announced that every $\Delta^0_2$ degree is the degree of categoricity for some computable structure.

When building a structure to have degree of categoricity $\degd$, one must achieve two goals: $\degd$ must be able to compute an isomorphism between any two computable presentations; and $\degd$ must be the least degree which can do this.  The natural way to achieve the second goal is to build two presentations, $\A$ and $\B$, such that every isomorphism between $\A$ and $\B$ computes $\degd$.  In other words, $\degd$ is the degree of isomorphism for the pair $\A, \B$.  When there exists such a pair of presentations, we say that $\degd$ is the {\em strong degree of categoricity} of $\A$.

The original constructions of degrees of categoricity actually constructed strong degrees of categoricity.  It is natural to wonder whether there is a degree which is the degree of categoricity of some computable structure, but is not the strong degree of categoricity of any computable structure.  This question remains open---so far, every degree which is known to be the degree of categoricity of some computable structure is also known to be the strong degree of categoricity of some computable structure.

A simpler question is whether there exists a structure $\A$ with some degree of categoricity $\degd$, but $\degd$ is not a strong degree of categoricity for $\A$.  Bazhenov, Kalimullin and Yamaleev~\cite{BaKaYa18}, and separately Csima and Stephenson~~\cite{CsSt19}, showed that this can occur.  They constructed a structure $\A$ with degree of categoricity $\zeroj$, but such that $\zeroj$ is not the degree of isomorphism for any two computable presentations of $\A$.  In both the Bazhenov-Kalimullin-Yamaleev example and the Csima-Stephenson example the structure is rigid, and there are three computable presentations, $\A_0, \A_1, \A_2$, such that if $f_1$ is the unique isomorphism between $\A_0$ and $\A_1$, and $f_2$ is the unique isomorphism between $\A_0$ and $\A_2$, then $f_1 \oplus f_2 \equiv_T \emptyset'$.  So $\zeroj$ is the degree of isomorphism for the triple $\A_0, \A_1, \A_2$, in the sense that it is the least degree able to compute an isomorphism between any two elements of the triple.

Bazhenov et al.\ observed that their methods can be generalized to produce, for any $n \in \omega$, a computable structure $\A$ such that $\zeroj$ is the degree of categoricity of $\A$, but $\zeroj$ is not the degree of isomorphism for any set of $n$ presentations $\A_0, \dots, \A_{n-1}$.  For the examples constructed using these methods, there will always be a finite set of presentations $\A_0, \dots, \A_{k-1}$ such that $\zeroj$ is the degree of isomorphism of this set.  They defined the term spectral dimension to describe this phenomenon: for a computable structure $\A$ with degree of categoricity $\degd$, the {\em spectral dimension} of $\A$ is the least cardinality of a set of presentations of $\A$ for which the degree of isomorphism is $\degd$.\footnote{The definition given here is a special case of Bazhenov et al.'s definition.  They defined spectral dimension for all computable structures, rather than just those with a degree of categoricity, using the notion of categoricity spectrum.}  They then asked whether a finite spectral dimension is necessary.  Is there a computable structure with a degree of categoricity $\degd$, but with infinite spectral dimension?  We show in \Cref{thm:weak_deg_cat} that there is such a structure, where $\degd = \zerojj$.

Changing gears slightly, a computable structure is {\em computably categorical} if it has degree of categoricity $\zero$---between any two computable copies, there is a computable isomorphism.  A computable structure $\A$ is {\em relatively $\Delta^0_\alpha$-categorical} if for any (not necessarily computable) copy $\B$ of $\A$, there is a $\Delta^0_\alpha(\B)$ isomorphism between $\B$ and $\A$.  Observe that a relatively $\Delta^0_1$-categorical structure is computably categorical.   Goncharov~\cite{Go77} showed that the converse fails.  Downey, Kach, Lempp, Lewis, Montalb\'an and Turetsky~\cite{DoKaLeLeMoTu15} showed that for every $\alpha < \wock$, there is a computably categorical structure which is not relatively $\Delta^0_\alpha$-categorical.  They used their techniques to show that the set of (indices for) computably categorical structures is $\Pi^1_1$-complete.

The structures built by Downey et al.\ are always relatively $\Delta^0_\beta$-categorical for some $\beta < \wock$.  It is thus natural to ask whether there is a computably categorical structure which is not relatively $\Delta^0_\alpha$-categorical for any $\alpha < \wock$.  This turns out to relate to several other notions.

By the Spector-Gandy theorem, every $\Pi^1_1$ formula $\forall f\, \theta(f, n)$ can be reinterpreted as $\exists f \in \Delta^1_1\, \theta'(f, n)$, where $\theta'$ is an arithmetic relation.  As $\Delta^1_1 = \bigcup_{\alpha < \wock} \Delta^0_\alpha$, this implies that every $\Pi^1_1$ set can be stratified into $\wock$ levels: if $X$ is a $\Pi^1_1$ set with $n \in X \iff \exists f \in \Delta^1_1\, \theta(n, f)$, then
\[
X = \bigcup_{\alpha < \wock} \{ n : \exists f \in \Delta^0_\alpha\, \theta(f, n)\}.
\]
For a given $\Pi^1_1$ set $X$, it can be enlightening to examine stratifications $X = \bigcup_{\alpha < \wock} X_\alpha$, where each $X_\alpha$ is uniformly $\Delta^0_\alpha$.  While the argument above always gives such a stratification, there is no known natural stratification for the set of computably categorical structures.\footnote{We acknowledge that our criteria for naturalness is vague.}  If every computably categorical structure were relatively $\Delta^0_\alpha$-categorical for some $\alpha <\wock$, this would give us such a stratification: categorize computably categorical structures based on the $\alpha$ such that they are relatively $\Delta^0_\alpha$-categorical.

Let us briefly recall the notion of the Scott rank of a structure.  For tuples $\abar, \bbar \in \A$ of the same length, define $\abar \equiv_0 \bbar$ if $\abar$ and $\bbar$ satisfy the same atomic formulae in $\A$.  For an ordinal $\alpha > 0$, define $\abar \equiv_\alpha \bbar$ if for every $\beta < \alpha$ and every $\cbar \in \A$ there exist $\dbar_0, \dbar_1 \in A$ such that $\abar\cbar \equiv_\beta \bbar\dbar_0$ and $\abar\dbar_1 \equiv_\beta \bbar\cbar$.  Define $r(\abar)$ to be the least $\alpha$ such that for all $\bbar$, $\abar \equiv_\alpha \bbar$ implies $\abar$ and $\bbar$ are in the same orbit.  By a $\Sigma^1_1$-bounding argument, for a computable structure $\A$, $r(\abar)$ will always exist and be an ordinal at most $\wock$.  The {\em Scott rank} of $\A$ is defined to be $\sup_{\abar \in \A} ( r(\abar)+1)$.

The following is a synthesis of published results and folklore.
\begin{proposition}
For a computable structure $\A$, the following are equivalent:
\begin{enumerate}
\item $\A$ is relatively $\Delta^1_1$-categorical.
\item For some $\alpha < \wock$, $\A$ is relatively $\Delta^0_\alpha$-categorical.
\item The Scott rank of $\A$ is a computable ordinal.
\item The orbits of $\A$ are uniformly hyperarithmetic.
\end{enumerate}

\end{proposition}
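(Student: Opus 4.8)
The plan is to prove the cycle $(3)\Rightarrow(2)\Rightarrow(1)\Rightarrow(4)\Rightarrow(3)$. The implication $(2)\Rightarrow(1)$ is immediate, since $\Delta^0_\alpha\subseteq\Delta^1_1$ for every $\alpha<\wock$. The two external ingredients are the Ash--Knight--Manasse--Slaman and Chisholm machinery relating relative $\Delta^0_\alpha$-categoricity to the existence of a \emph{formally $\Sigma^0_\alpha$ Scott family} --- a c.e.\ family of computable $\Sigma_\alpha$ formulas, in finitely many variables and over finitely many parameters, that separates orbits --- in both its $\Delta^0_\alpha$ and its $\Delta^1_1$ forms (see \cite{AshKnight}), together with $\Sigma^1_1$-bounding, used as in the excerpt's observation that each $r(\abar)$ exists.

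For $(3)\Rightarrow(2)$: if the Scott rank is the computable ordinal $\gamma$ then $r(\abar)<\gamma$ for every tuple, and since the relations $\equiv_\alpha$ are non-increasing in $\alpha$ the orbit of $\abar$ equals $\{\bbar : \abar\equiv_\gamma\bbar\}$. By the effective correspondence between the back-and-forth relations and computable infinitary formulas, this class is defined, uniformly in $\abar$ and a notation for $\gamma$, by a computable $\Pi_\gamma$ (hence computable $\Sigma_{\gamma+1}$) formula $\psi^\gamma_\abar$, and $\{\psi^\gamma_\abar\}$ is a computable formally $\Sigma^0_{\gamma+1}$ Scott family; so $\A$ is relatively $\Delta^0_{\gamma+1}$-categorical, which is $(2)$ since $\gamma+1<\wock$. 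For $(1)\Rightarrow(4)$: by the $\Delta^1_1$ form of the Scott-family characterization, relative $\Delta^1_1$-categoricity provides a hyperarithmetically enumerated Scott family of computable infinitary formulas; a $\Sigma^1_1$-bounding argument bounds the ranks of these formulas, and the complexity of the enumeration, below a single $\alpha<\wock$, so the family may be taken to consist of computable $\Sigma_\alpha$ formulas. In particular every orbit is $\Sigma^0_\alpha$-definable over $\A$ from finitely many parameters, so the orbits --- equivalently, the relation of lying in a common orbit --- are uniformly hyperarithmetic.

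It remains to prove $(4)\Rightarrow(3)$. Let $E$ denote the orbit relation, now hyperarithmetic. One checks that $E$ coincides with $\equiv_{\wock}$, which in turn is the intersection $\bigcap_{\alpha<\wock}\equiv_\alpha$ of the non-increasing relations $\equiv_\alpha$; and, for a notation $a$ for an ordinal $\alpha<\wock$, the relation $\equiv_\alpha$ is hyperarithmetic uniformly in $a$, so the statement ``$r(\abar)\le\alpha$'', namely $\forall\bbar\,(\abar\equiv_\alpha\bbar\rightarrow(\abar,\bbar)\in E)$, is likewise hyperarithmetic uniformly in $a$. A $\Sigma^1_1$-bounding argument then forbids the $\equiv_\alpha$ from strictly decreasing cofinally in $\wock$, so the sequence stabilizes at some computable $\gamma$; hence $r(\abar)\le\gamma$ for all $\abar$ and the Scott rank is computable. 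The main obstacle is exactly this passage --- and the analogous one inside $(1)\Rightarrow(4)$ --- from a global hyperarithmeticity hypothesis to a single countable ordinal level: it is carried by the $\Delta^1_1$ Scott-family characterization together with $\Sigma^1_1$-bounding, while the rest is routine back-and-forth and Scott-family bookkeeping. A more self-contained route to $(1)\Rightarrow(3)$ runs through the contrapositive: if the Scott rank is not computable the relations $\equiv_\alpha$ fail to stabilize below $\wock$, and the diagonalization technique of \cite{DoKaLeLeMoTu15} then builds a copy $\B\cong\A$ over which no isomorphism to $\A$ is $\Delta^0_\alpha(\B)$ for any $\alpha<\wock$, hence none is $\Delta^1_1(\B)$ --- and that construction is where the real difficulty resides.
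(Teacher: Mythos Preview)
Your cycle $(3)\Rightarrow(2)\Rightarrow(1)\Rightarrow(4)\Rightarrow(3)$ is structurally different from the paper's decomposition, and the difference matters precisely at the hard step.  The paper proves $(1)\Rightarrow(2)$ directly: it takes a $\Sigma^1_1$-generic permutation $f$, forms the pullback $\B$, uses the Greenberg--Monin fact that $\wock(f)=\wock$ to conclude that the promised $\Delta^1_1(\B)$ isomorphism is actually $\Delta^0_\alpha(\B)$ for some $\alpha<\wock$, and then observes that Ash's proof of the Scott-family characterization only needs the isomorphism over a single sufficiently generic copy to extract a formally $\Sigma^0_\alpha$ Scott family.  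That is the full content of the implication.

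Your $(1)\Rightarrow(4)$ instead invokes ``the $\Delta^1_1$ form of the Scott-family characterization'' as a black box from \cite{AshKnight}.  This is the gap: that characterization is not available off the shelf in the way the fixed-$\alpha$ version is.  The standard Ash/Chisholm forcing argument produces a $\Sigma^0_\alpha$ Scott family from a $\Delta^0_\alpha(\B)$ isomorphism over an $\alpha$-generic $\B$; lifting this to $\Delta^1_1$ requires a copy $\B$ which is simultaneously generic enough for the forcing argument and satisfies $\wock(\B)=\wock$ so that ``$\Delta^1_1(\B)$'' collapses to some $\Delta^0_\alpha(\B)$ with $\alpha$ computable.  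That is exactly what the paper's $\Sigma^1_1$-generic accomplishes, and it is the step you have deferred rather than carried out.  Your $\Sigma^1_1$-bounding afterwards is fine, but it only applies once you already have the hyperarithmetic Scott family in hand.

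Your alternative contrapositive route to $(1)\Rightarrow(3)$ also misfires: the techniques of \cite{DoKaLeLeMoTu15} build specific computably categorical structures with prescribed non-relative-categoricity properties; they do not take an arbitrary $\A$ of noncomputable Scott rank and produce a copy $\B$ with no $\Delta^1_1(\B)$ isomorphism to $\A$.  A folklore argument of that shape does exist (via Nadel's analysis or hyperarithmetic back-and-forth), but it is not the cited paper, and in any case making it precise again runs through controlling $\wock$ of the constructed copy.
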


\begin{proof}
$(2)\Rightarrow (1)$ is immediate.  For the converse, fix a $f$ a $\Sigma^1_1$-generic permutation of $\omega$, and define $\B$ as the pullback of $\A$ along $f$, so that $f: \B \to \A$ is an isomorphism.  As $f$ is $\Sigma^1_1$-generic, $\wock(f) = \wock$~\cite{GrMo17}.  As $\B \le_T f$, $\wock(\B) = \wock$.

Since $\A$ is relatively $\Delta^1_1$-categorical, there is a $g \in \Delta^1_1(\B)$ with $g : \B \to \A$.  So $g \in \Delta^0_\alpha(\B)$ for some $\alpha < \wock(\B) = \wock$.  By Ash's proof of the characterization of relatively $\Delta^0_\alpha$-categoricity using Scott families~\cite{Ash87}, $\A$ is relatively $\Delta^0_\alpha$-categorical.  The key observation is that Ash's proof does not use the full power of $\Delta^0_\alpha$-categoricity, but just the existence of a $\Delta^0_\alpha(\B)$ isomorphism, where $\B$ is defined to be a sufficiently generic permutation of $\A$.

$(2) \Leftrightarrow (3)$ is by the standard Scott rank analysis and the characterization of relatively $\Delta^0_\alpha$-categoricity using Scott families (see Ash \& Knight section 6.7 and theorem 10.14~\cite{AshKnight}).

$(3) \Rightarrow (4)$ is by the standard Scott rank analysis, while $(4) \Rightarrow (3)$ is by $\Sigma^1_1$-bounding.
\end{proof}

In \Cref{thm:cc_high_Scott}, we construct a computably categorical structure which has non-computable Scott rank, and thus a structure which is not relatively $\Delta^1_1$-categorical, defeating the earlier potential stratification of the computably categorical structures.

\medskip

With a simple modification, the computably categorical structure we create in \Cref{thm:cc_high_Scott} can be turned into a structure of computable dimension 2.  A computable structure has {\em computable dimension 2} if it has exactly two computable copies up to computable isomorphism.  Prior constructions of such structures used a method called the special component technique~\cite{Go77,ChGoKh99,HiKhSh03}.  When a structure of computable dimension 2 is built using this technique, the two computable copies which are not computably isomorphic will be $\Delta^0_3$ isomorphic.  In contrast, for our structure of computable dimension 2, there is no hyperarithmetic isomorphism between the two copies.

The proofs of \Cref{thm:cc_high_Scott,thm:weak_deg_cat} use similar techniques, although \Cref{thm:weak_deg_cat} is the more complicated construction.  We recommend the reader begin with \Cref{thm:cc_high_Scott} as a warmup.  Both constructions are based on an extension of the techniques of Franklin and Turetsky for coding $\Pi^0_1$-classes into the automorphism group of a structure~\cite{FrTuSubmitted}, while also incorporating the methods of Downey et al.\ for constructing computably categorical structures~\cite{DoKaLeLeMoTu15}.

\section{A computably categorical structure with high Scott rank}

This section is devoted to the proof of the following theorem.

\begin{theorem}\label{thm:cc_high_Scott}
There is a computably categorical structure with non-computable Scott rank.
\end{theorem}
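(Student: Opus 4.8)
The plan is to combine two ingredients: the Downey–Kach–Lempp–Lewis–Montalbán–Turetsky machinery for building a computably categorical structure that fails to be relatively $\Delta^0_\alpha$-categorical (organized along a tree of strategies indexed by computable ordinals), and the Franklin–Turetsky device for coding information into the automorphism group of a structure. The key point we want to exploit is that a structure whose automorphisms behave like elements of a closed class can be computably categorical for "local" reasons — every finite configuration has a computable isomorphism available — while the global orbit-equivalence relation $\equiv_\alpha$ is forced to stabilize only past every computable ordinal. So the target is to engineer a structure $\A$ where, for each $\alpha < \wock$, there are tuples $\abar, \bbar$ with $\abar \equiv_\alpha \bbar$ but $\abar, \bbar$ in distinct orbits, whence $r(\abar) > \alpha$ for cofinally many tuples and the Scott rank is $\geq \wock$; simultaneously, we must maintain enough uniformity to recover a computable isomorphism between any two computable presentations.

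First I would fix a computable system of notations and, for each ordinal notation, a back-and-forth-style requirement whose injury/activation pattern is governed by whether a pair of tuples has been "separated" at that level. Following Downey et al., the structure is assembled from components, each component being a small gadget (a directed graph or similar) carrying a few parameters; isomorphism type of a component encodes both which strategy built it and a finite amount of coding data. The computable-categoricity requirements $R_e$ (against the $e$th partial computable function, ensuring that if $\Phi_e$ looks like it could be an isomorphism between $\A$ and a rival copy it can be corrected to a genuine computable isomorphism) are met by the standard device of keeping "twin" components that can be swapped by a computable automorphism — this is where the Franklin–Turetsky coding lives: the automorphism group is arranged so that the relevant swaps are always available, and the $\Pi^0_1$-class being coded is chosen so that the tree of strategies has branches of every computable length. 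The high-Scott-rank requirements $S_\alpha$ act by, at the appropriate stage, committing to build a pair of components (or tuples of components) that will remain $\equiv_\alpha$ but not $\equiv_{\alpha+1}$: concretely, their $\beta$-equivalence for $\beta < \alpha$ is guaranteed by always having the back-and-forth partner available (again via the automorphism coding), while a single "defect" placed at level $\alpha$ — an element with no matching partner after exactly $\alpha$ rounds of the game — certifies that they lie in different orbits.

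The construction proceeds by a finite-injury (or, more likely, a true-stages / $\Pi^1_1$-priority along the tree of notations) argument: strategies are arranged on a priority tree whose nodes correspond to ordinal notations, higher-priority strategies may restrain components, and lower-priority $S_\alpha$-strategies wait until they see a stable environment before planting their defect. The verification then has two halves. For computable categoricity: given computable $\A$ and $\C \cong \A$, one builds an isomorphism stage by stage, using the fact that each component has a computably-recognizable "skeleton" and that the automorphism-coding guarantees that whenever two components have the same skeleton there is a uniformly-found isomorphism between them — so the back-and-forth never gets stuck, and the approximations converge. For high Scott rank: one shows by induction on $\beta < \alpha$ that the planted pair satisfies $\equiv_\beta$ (the inductive step uses the availability of automorphisms to supply the required $\dbar_i$), and that the level-$\alpha$ defect witnesses non-equivalence of orbits; since $\alpha$ ranges over all computable ordinals, $r(\abar)$ is unbounded below $\wock$, so the Scott rank is not a computable ordinal.

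The main obstacle I expect is the interaction between the two families of requirements: the computable-categoricity strategies want the automorphism group to be rich enough that every finite isomorphism extends (so swaps are always available), but the high-Scott-rank strategies need a permanent asymmetry — a tuple whose orbit is genuinely not reached by any computable-length back-and-forth. Reconciling these means the "defect" a strategy $S_\alpha$ plants must be invisible at all finite levels and at all levels $\beta < \alpha$, yet real at level $\alpha$; arranging this uniformly in $\alpha$, while keeping the components computable and the whole tree of strategies computably laid out, is the delicate part. Concretely, it will require that the coded $\Pi^0_1$-class (equivalently, the illfounded-at-$\wock$ tree of strategies) be chosen so that the automorphism-coding supplies back-and-forth partners exactly up to the level dictated by each branch, and no further — so the combinatorics of the gadgets must make "$\equiv_\alpha$ but different orbit" a local, gadget-level phenomenon that the categoricity strategies can tolerate because it never obstructs a finite extension.
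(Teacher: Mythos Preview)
Your proposal identifies the right inspirations and the central tension, but it is not a proof: it never specifies the structure. You speak of ``components,'' ``gadgets,'' ``twins,'' and ``planting a defect at level $\alpha$,'' but none of these is ever defined, and the whole argument rests on whatever they turn out to be. More seriously, the sketch oscillates between two incompatible architectures without committing to either. One is a per-ordinal scheme with requirements $S_\alpha$ that, for each computable $\alpha$, actively build a pair $\abar \equiv_\alpha \bbar$ in different orbits. The other is coding a single $\Pi^0_1$ class (``the illfounded-at-$\wock$ tree'') into the automorphism group. You invoke both at once, but they call for different constructions and different verifications, and your resolution of the tension (``the defect must be invisible at levels $\beta<\alpha$ yet real at level $\alpha$'') is a restatement of the problem, not a mechanism.

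The paper takes the second route, and does so with a concrete device you do not have. There are no $S_\alpha$ requirements at all. Instead one fixes once and for all a computable tree $T$ whose set of extendible nodes is not hyperarithmetic (the descending sequences through the Harrison order), and builds a structure whose universe is essentially $[\omega]^{<\omega}\times\omega^{<\omega}$: each string $\sigma$ carries a copy of the affine space $([\omega]^{<\omega},(E_i)_i)$, and a binary relation $P$ ties the hyperplane $\{F:i\in F\}$ at $\sigma$ to the parity of $|G|$ at $\sigma\cat i$. During a $\Pi^0_2$ priority construction one builds an arithmetic copy $Q$ of $T$ inside $\omega^{<\omega}$ and uses c.e.\ labels $S_n$ to pin $(\emptyset,\sigma)$ whenever $\sigma\notin Q$. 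The payoff is a clean Muchnik equivalence: automorphisms moving $(\emptyset,\sigma)$ to $(\{i\},\sigma)$ correspond exactly to paths through $[\sigma\cat i]\cap[Q]$. High Scott rank is then immediate---the orbit relation computes extendibility in $Q\cong T$, which is not hyperarithmetic---with no ordinal-by-ordinal bookkeeping. Computable categoricity is handled by $M_i$-strategies that exploit the $S_n$-pinning to locate $(\emptyset,\sigma)$ in any copy for $\sigma\notin Q$, and then propagate along the $E_i$. Your plan contains none of this machinery; in particular, nothing in your sketch explains why automorphisms of your structure would be governed by paths through a tree, which is the hinge of the whole argument.
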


Fix $T \subset \omega^{<\omega}$ the tree of descending sequences through the Harrison ordering.  The only important properties of $T$ are that it is computable and that the set of extendible nodes in $T$ is not hyperarithmetic.

Our structure will have $U$ a unary relation, $(W_\sigma)_{\sigma \in \omega^{<\omega}}$ all unary relations, $(E_i)_{i \in \omega}$ all binary relations, $P$ a binary relation, $f$ a unary function, and $(V_n)_{n \in \omega}$ all unary relations.  Our structure's universe will be $\bigl([\omega]^{<\omega} \times \omega^{<\omega}\bigr) \sqcup C$, where $C$ is an infinite computable set.

The purpose of $U$ is to distinguish $C$ from the rest of the structure, so $U(x)$ will hold if and only if $x \in C$.  $P$, the $E_i$ and the $W_\sigma$ will never hold with elements of $C$, while the $V_n$ will never hold with elements outside of $C$.  The purpose of $C$, $f$ and $V_n$ is to allow us to apply labels to elements of $[\omega]^{<\omega} \times \omega^{<\omega}$ in a c.e.\ fashion.  For each $x \in C$, $f(x) \not \in C$, and for each $x \not \in C$, $f(x) = x$.  Also, for each $x \in C$, there will be a unique $n$ such that $V_n(x)$ holds.  For $y \not \in C$, we will write $S_n(y)$ for ``$\exists x \in C\, f(x) = y \wedge V_n(x)$''.  During our construction, there will be times when we wish to declare $S_n(y)$ for some $n$ and $y$.  When we do, we choose the first element $x \in C$ which has not yet been used and define $V_n(x)$ and $f(x) = y$.  We simultaneously define $\neg V_m(x)$ for all $m \neq n$.  As there will be infinitely many times we wish to make such a declaration, this will completely define the structure on $C$.  For simplicity of presentation, we may sometimes declare $S_n(y)$ and then make the same declaration again at a later stage.  In this case, we do not mean to choose a new $x$ for this declaration; instead, the duplicate declaration should simply be ignored.

For the remainder of the construction, we will not mention $U$, $C$, $f$ or the $V_n$.  Instead, we will proceed as if our structure has universe $[\omega]^{<\omega} \times \omega^{<\omega}$ and our language is $(W_\sigma)_{\sigma \in \omega^{<\omega}}$, $(E_i)_{i \in \omega}$, $P$, $(S_n)_{n \in \omega}$, and each $S_n$ only needs to be declared in a c.e.\ fashion rather than a computable fashion.  Note that this does not affect computable categoricity: if we can find a computable isomorphism $g$ between two presentations $\A_0$ and $\A_1$ of this new structure, we can extend to a computable isomorphism between the corresponding versions of the old structure -- for each $y \in \A_0$, when we see an $x$ with $f(x) = y$ and $V_n(x)$, we search for an $x'$ with $f(x') = g(y)$ and $V_n(x')$, and then define $g(x) = x'$.

With the exception of the $S_n$, we can give a complete description of our structure immediately.  For $(F, \tau), (G, \rho) \in [\omega]^{<\omega} \times \omega^{<\omega}$:
\begin{itemize}
\item $W_\sigma( (F, \tau) )$ holds iff $\sigma = \tau$.
\item $E_i( (F,\tau), (G,\rho) )$ holds iff $\tau = \rho$ and $F\triangle G = \{i\}$.
\item $P( (F,\tau), (G,\rho) )$ holds iff $\rho = \tau\cat i$ for some $i$, and one of the following holds:
\begin{itemize}
\item $i \not \in F$ and $|G|$ is even; or
\item $i \in F$ and $|G|$ is odd.
\end{itemize}
\end{itemize}
We think of $[\omega]^{<\omega}$ as an affine space acted on by $\bigoplus_{i < \omega} \Z/2$, where $F + e_i = F\triangle{i}$.  Then $E_i(F, G) \iff F + e_i = G$.  Alternatively, we can think of $[\omega]^{<\omega}$ as the vertices of an infinite dimensional cube, where there is an edge between $F$ and $G$ iff $|F\triangle G| = 1$.  If $F\triangle G = \{i\}$, we color the edge between $F$ and $G$ with color $i$, and this is represented by the relation $E_i$.  In either case, we assign a copy of this structure to every string in $\omega^{<\omega}$, and the various $W_\sigma$ let us easily identify which copy a given element belongs to.

\begin{claim}
The automorphisms of $([\omega]^{<\omega}, (E_i)_{i \in \omega})$ are precisely the maps of the form $g(F) = F\triangle H$ for some fixed $H \in [\omega]^{<\omega}$.
\end{claim}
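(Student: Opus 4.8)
The plan is to prove the two inclusions separately. For the inclusion $\supseteq$, each map $g(F) = F \triangle H$ is an automorphism: since $(F \triangle H) \triangle (G \triangle H) = F \triangle G$, we have $E_i(F,G) \iff F \triangle G = \{i\} \iff g(F) \triangle g(G) = \{i\} \iff E_i(g(F), g(G))$, and $g$ is a bijection whose inverse is translation by the same $H$. It will also be convenient to record that these translations form a group under composition.

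For the inclusion $\subseteq$, let $g$ be an arbitrary automorphism and set $H = g(\emptyset)$. Since translation by $H$ is an automorphism, so is the map $F \mapsto g(F) \triangle H$, and this map fixes $\emptyset$. So it suffices to show that every automorphism fixing $\emptyset$ is the identity: from $g(F)\triangle H = F$ we then read off $g(F) = F \triangle H$. Thus assume $g(\emptyset) = \emptyset$.

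The crux is the observation that, because each $E_i$ is a relation symbol of the language, $g$ preserves $E_i$ in both directions, and for each $F$ there is exactly one $G$ with $E_i(F, G)$, namely $G = F \triangle \{i\}$; hence $g(F \triangle \{i\}) = g(F) \triangle \{i\}$ for every $F$ and every $i$. I would then show $g(F) = F$ by induction on $|F|$: the base case $F = \emptyset$ is the hypothesis, and for $|F| > 0$ pick any $i \in F$, note $F' := F \triangle \{i\}$ has $|F'| < |F|$, and compute $g(F) = g(F' \triangle \{i\}) = g(F') \triangle \{i\} = F' \triangle \{i\} = F$ using the inductive hypothesis. Hence $g$ is the identity, which completes the argument.

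There is no serious obstacle here: in essence the structure is a connected graph carrying a consistent edge-colouring in which every vertex has exactly one edge of each colour, so an automorphism is determined by its value at a single vertex, and the translations already realize every possible such value. The only point needing a little care is the legitimacy of reducing to the case $g(\emptyset) = \emptyset$, which uses that the translations form a group of automorphisms.
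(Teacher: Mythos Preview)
Your proof is correct and follows essentially the same approach as the paper's: both directions use the identity $(F\triangle H)\triangle(G\triangle H)=F\triangle G$ for the easy inclusion, and both use induction on $|F|$ together with the fact that $E_i(F,G)$ forces $g(F)\triangle g(G)=\{i\}$ for the converse. The only cosmetic difference is that you first compose with the translation by $H=g(\emptyset)$ to reduce to the case $g(\emptyset)=\emptyset$ and then prove $g$ is the identity, whereas the paper carries $H$ through the induction directly, proving $g(F)=F\triangle H$ at each step; the underlying computation is the same.
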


\begin{proof}
To see that such a map is an automorphism, observe that
\begin{align*}
E_i(F,G) &\iff F\triangle G = \{i\}\\
&\iff F\triangle G \triangle \emptyset = \{i\}\\
&\iff F\triangle G\triangle (H\triangle H) = \{ i\}\\
&\iff (F\triangle H)\triangle (G\triangle H) = \{ i\}\\
&\iff E_i(g(F), g(G)).
\end{align*}

Conversely, suppose $g$ is an automorphism.  Let $H = g(\emptyset)$.  We prove by induction on $|F|$ that $g(F) = F \triangle H$.  The case $|F| = 0$ is immediate.  For $|F| > 0$, fix $i \in F$, and let $G = F - \{i\}$.  Then $E_i(F, G)$, so $E_i(g(F), g(G))$, and thus
\begin{align*}
g(F)\triangle g(G) &= \{i\}\\
g(F) &= g(G) \triangle \{i\}\\
g(F) &= (G\triangle H)\triangle \{i\}\\
g(F) &= (G\triangle \{i\})\triangle H\\
g(F) &= F\triangle H.\qedhere
\end{align*}
\end{proof}

Interpreting $[\omega]^{<\omega}$ as an affine space, it partitions into two hyperplanes perpendicular to $e_i$: $\{ F : i \not \in F\}$ is one, and $\{ F :  i \in F\}$ is the other.  Under the cube interpretation, these are the two infinite components that result if all the edges with color $i$ are deleted.  If we are examining the copy of the structure assigned to $\tau \in \omega^{<\omega}$, we have associated these two sets with a partition of the copy assigned to $\tau\cat i$: the former set is associated, via $P$, with the elements of even cardinality, while the latter set is associated with the elements of odd cardinality.

During the course of our construction, we will build an arithmetic tree $Q \subseteq \omega^{<\omega}$ which will be arithmetically isomorphic to $T$.  For $\sigma \in Q$, $S_n((F,\sigma))$ will hold for every $n$ and every $F \in [\omega]^{<\omega}$.  For $\sigma \not \in Q$, there will be an $n$ such that $S_n((F,\sigma))$ holds iff $F = \emptyset$.  The purpose of this setup is the following.

\begin{claim}\label{claim:muchnik}
The automorphisms of $\A$ which send $(\emptyset, \sigma)$ to $(\{i\}, \sigma)$ are Muchnik equivalent to $[\sigma\cat i]\cap [Q]$.  More generally, the automorphisms of $\A$ which send $(\emptyset,\sigma)$ to $(F, \sigma)$ are Muchnik equivalent to the join $\bigoplus_{i \in F} ([\sigma\cat i] \cap [Q])$.

In particular, such an automorphism exists iff the appropriate set is nonempty.
\end{claim}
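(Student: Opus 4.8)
The plan is to turn an arbitrary automorphism of $\A$ into a combinatorial object on $\omega^{<\omega}$ and then read both Muchnik reductions directly off that object. Since $W_\sigma((F,\tau))$ holds exactly when $\sigma = \tau$, every automorphism $g$ of $\A$ must fix the second coordinate, so it restricts to a bijection of each copy $\{(F,\tau) : F \in [\omega]^{<\omega}\}$; this restriction preserves every $E_i$, so by the claim identifying the automorphisms of $([\omega]^{<\omega},(E_i)_{i\in\omega})$ there are sets $H_\tau \in [\omega]^{<\omega}$ with $g((F,\tau)) = (F\triangle H_\tau, \tau)$ for all $F$ and $\tau$. Thus an automorphism of $\A$ is the same thing as a family $(H_\tau)_{\tau\in\omega^{<\omega}}$, and the first task is to say exactly which families occur. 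A short computation from the definition of $P$ (using $|G\triangle H| \equiv |G|+|H| \pmod 2$) shows that $g$ preserves $P$ if and only if, for all $\tau$ and all $i$, $i \in H_\tau \iff |H_{\tau\cat i}|$ is odd. For the $S_n$: on a copy $\sigma \in Q$ every $S_n$ holds everywhere, so there is nothing to preserve; on a copy $\sigma \notin Q$ the distinguished $n$ with $S_n((F,\sigma)) \iff F = \emptyset$ forces $H_\sigma = \emptyset$, and when $H_\sigma = \emptyset$ preservation of all $S_n$ on that copy is automatic. So the automorphisms of $\A$ are precisely the families $(H_\tau)$ with $H_\tau = \emptyset$ whenever $\tau\notin Q$ and $i\in H_\tau \iff |H_{\tau\cat i}|$ odd for all $\tau,i$.

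Now fix $\sigma$ and $F$ and let $\mathcal{G}$ be the mass problem of automorphisms sending $(\emptyset,\sigma)$ to $(F,\sigma)$, i.e.\ families as above with $H_\sigma = F$. First I would show every member of $\mathcal{G}$ computes a member of $\bigoplus_{i\in F}([\sigma\cat i]\cap[Q])$: given such a family and $i\in F$, we have $i\in H_\sigma$, so $|H_{\sigma\cat i}|$ is odd, so $H_{\sigma\cat i}\neq\emptyset$, and hence $\sigma\cat i\in Q$ (as $H_\tau = \emptyset$ off $Q$); picking $j_1\in H_{\sigma\cat i}$ makes $|H_{\sigma\cat i\cat j_1}|$ odd, so nonempty, so $\sigma\cat i\cat j_1\in Q$; iterating produces a path through $Q$ extending $\sigma\cat i$, and this path is computable from $g$. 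Joining over $i \in F$ gives the reduction. In particular, if $[\sigma\cat i]\cap[Q] = \emptyset$ for some $i\in F$ then $\mathcal{G}=\emptyset$ as well, and two empty mass problems are Muchnik equivalent; so from here on we may assume every $[\sigma\cat i]\cap[Q]$, $i\in F$, is nonempty (the case $F = \emptyset$ being trivial, since the identity lies in $\mathcal G$ and the empty join is the trivial mass problem).

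The reverse reduction is where the real work lies, since it requires producing a coherent family over all of $\omega^{<\omega}$. Suppose we are handed paths $g_i \in [\sigma\cat i]\cap[Q]$ for each $i\in F$; the subtrees below the distinct nodes $\sigma\cat i$ are pairwise disjoint, so the $g_i$ never conflict. Define $(H_\tau)$ by: $H_\sigma = F$; $H_\tau = \{g_i(|\tau|)\}$ whenever $\tau$ is a proper initial segment of some $g_i$ with $|\tau|\ge|\sigma|+1$; $H_\tau = \{\sigma(|\tau|)\}$ if $\tau$ is a proper initial segment of $\sigma$ and $|F|$ is odd; and $H_\tau = \emptyset$ at every other node. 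A finite case analysis verifies $i\in H_\tau \iff |H_{\tau\cat i}|$ odd at every node (the parity at the root is unconstrained, and the parity forced at each ancestor of $\sigma$ is exactly what the $|F|$-parity clause provides), and $H_\tau$ is nonempty only at initial segments of $\sigma$ or of some $g_i$, all of which lie in $Q$ since $Q$ is a tree and each $g_i$ is a path through $Q$. Hence $(H_\tau)$ is an automorphism of $\A$, it sends $(\emptyset,\sigma)$ to $(\emptyset\triangle F,\sigma)=(F,\sigma)$, and it is computable from $\bigoplus_{i\in F}g_i$. (If $\A$ is taken to be the original structure with $U$, $C$, $f$ and the $V_n$, this automorphism extends computably by the remark preceding the claim.) Combining the two reductions gives the Muchnik equivalence, and the final sentence follows since $\mathcal G$ is nonempty exactly when every $[\sigma\cat i]\cap[Q]$ with $i\in F$ is, which is exactly when the join is. I expect the main obstacle to be the bookkeeping in this last step --- keeping the parity condition satisfied all the way up to the root while never forcing $H_\tau\neq\emptyset$ at a node outside $Q$ --- but the disjointness of the subtrees below the $\sigma\cat i$ keeps it manageable.
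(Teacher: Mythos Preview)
Your proof is correct and follows essentially the same approach as the paper. The paper proceeds slightly more directly---in the forward direction it tracks $g((\emptyset,\sigma_j))$ and uses the relation $P$ explicitly at each step rather than first reducing to the parity condition $i\in H_\tau \iff |H_{\tau\cat i}|$ odd, and in the reverse direction it writes down the same automorphism you do without first stating the global characterization of automorphisms as families $(H_\tau)$---but the underlying constructions in both directions are identical to yours.
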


\begin{proof}
Suppose $g$ is an automorphism of $\A$ sending $(\emptyset, \sigma)$ to $(F,\sigma)$, and $i \in F$.  We construct, effectively in $g$, a sequence $\sigma_0 \subset \sigma_1 \subset \sigma_2 \subset \dots$ with $\sigma_0 = \sigma\cat i$ and maintaining the inductive assumption that $g( (\emptyset, \sigma_j) ) \neq (\emptyset, \sigma_j)$.  By our earlier discussion about the $S_n$ and their relationship to $Q$, it will follow that each $\sigma_j \in Q$.

Note that $P( (\emptyset, \sigma), (\emptyset, \sigma\cat i))$, but $\neg P( (F, \sigma), (\emptyset, \sigma\cat i))$.  This shows that $g( (\emptyset, \sigma\cat i)) \neq (\emptyset, \sigma\cat i)$.  So $\sigma_0 = \sigma\cat i$ satisfies the inductive assumption.

Now, suppose we have defined $\sigma_j$.  Let $G$ be such that $g( (\emptyset, \sigma_j) ) = (G, \sigma_j)$.  By assumption, $G \neq \emptyset$, so fix $k \in G$.  As $P( (\emptyset, \sigma_j), (\emptyset, \sigma_j\cat k))$ but $\neg P( (G, \sigma_j), (\emptyset, \sigma_j\cat k))$, it must be that $g((\emptyset, \sigma_j\cat k)) \neq (\emptyset, \sigma_j\cat k)$.  So we let $\sigma_{j+1} = \sigma_j\cat k$.

\smallskip

Conversely, fix $(h_i)_{i \in F}$ with $h_i \in ([\sigma\cat i] \cap [Q])$.  We will construct, effectively in $(h_i)_{i \in F}$, an automorphism $g$ of $\A$ sending $(\emptyset, \sigma)$ to $(F,\sigma)$.  Define $g( (G, \sigma) ) = (G\triangle F, \sigma)$ for all $G \in [\omega]^{<\omega}$.  For each $\tau$ which is not an initial segment of any $h_i$, we define $g( (G, \tau) ) = (G,\tau)$.  For $\tau$ extending $\sigma$ and an initial segment of $h_i$, fix $k$ such that $\tau\cat k$ is an initial segment of $h_i$.  We define $g( (G, \tau)) = (G\triangle \{k\}, \tau)$ for all $G \in [\omega]^{<\omega}$.  For $\tau$ a proper initial segment of $\sigma$, our construction depends on the parity of $|F|$.  If $|F|$ is even, we define $g( (G, \tau)) = (G, \tau)$ for all $G \in [\omega]^{<\omega}$.  If $|F|$ is odd, we fix the $k$ such that $\tau\cat k$ is an initial segment of $\sigma$ and define $g( (G, \tau) ) = (G \triangle \{k\}, \tau)$ for all $G \in [\omega]^{<\omega}$.

As $g((G, \tau)) = (G, \tau)$ for any $\tau$ which is not an initial segment of any $h_i$, and thus for any $\tau \not \in Q$, we see that $g$ respects all the $S_n$.  As $g( (G,\tau) ) = (G\triangle H_\tau, \tau)$ for some $H_\tau$, for every $\tau$, we see that $g$ respects all the $E_i$ and $W_\tau$.  Finally, $|H_{\tau\cat k}|$ is odd precisely if $k \in H_\tau$, and so $g$ respects $P$.  Thus $g$ is an automorphism.
\end{proof}

\begin{claim}
$\A$ has non-computable Scott rank.
\end{claim}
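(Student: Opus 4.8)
The plan is to deduce this from the Proposition above: since $\A$ is a computable structure, it has computable Scott rank if and only if its orbits are uniformly hyperarithmetic, so it suffices to show that the orbits of $\A$ are \emph{not} uniformly hyperarithmetic. I would do this by isolating, inside the orbit structure of $\A$, a copy of the set of extendible nodes of $Q$ (equivalently, of $T$), which is not hyperarithmetic.

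Concretely, consider the relation $R = \{\seq{\sigma, i} : (\{i\},\sigma) \text{ lies in the orbit of } (\emptyset,\sigma)\}$. If the orbits of $\A$ were uniformly hyperarithmetic then $R$ would be hyperarithmetic, since deciding membership of one element in the orbit of another is then uniform. On the other hand, $\seq{\sigma,i} \in R$ exactly when there is an automorphism of $\A$ carrying $(\emptyset,\sigma)$ to $(\{i\},\sigma)$, and by \Cref{claim:muchnik} this happens exactly when $[\sigma\cat i] \cap [Q] \neq \emptyset$, i.e.\ when $\sigma\cat i$ is extendible in $Q$. Every nonempty string has the form $\sigma\cat i$, and the empty string is extendible in $Q$ since $Q$ is ill-founded (being isomorphic to the ill-founded $T$); hence a hyperarithmetic $R$ would make the set of extendible nodes of $Q$ hyperarithmetic. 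Finally I would transfer this back to $T$: fixing the arithmetic isomorphism $h\colon Q \to T$ promised by the construction, $h$ is a bijection with arithmetic graph, so $h^{-1}$ is also arithmetic, and both preserve extendibility of nodes; thus the extendible nodes of $T$ would be hyperarithmetic, contradicting the defining property of $T$.

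I do not expect a genuine obstacle here: the real work has already been carried out in \Cref{claim:muchnik} and in the choice of $T$, and what remains is bookkeeping. The point requiring the most care is simply that, at this stage, the relevant facts about $Q$ — that it is arithmetic and arithmetically isomorphic to $T$, and that the structure built over it is genuinely computable — are only promised by the construction still to come, so the proof of this claim is formally completed only once that construction is in place; modulo that, the argument is exactly the reduction above.
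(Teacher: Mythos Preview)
Your proposal is correct and follows essentially the same approach as the paper: reduce to showing the orbits are not uniformly hyperarithmetic, use \Cref{claim:muchnik} to identify extendibility in $Q$ with an orbit relation, and invoke the arithmetic isomorphism with $T$ to reach a contradiction. The paper's version is terser (it phrases the key observation as ``$\sigma \in Q$ is extendible iff some $(\{i\},\sigma)$ shares the orbit of $(\emptyset,\sigma)$'' rather than isolating your relation $R$), but the content is the same.
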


\begin{proof}
It suffices to show that the orbits of $\A$ are not uniformly hyperarithmetic.  Note that $\sigma \in Q$ is extendible to a path in $[Q]$ precisely if there is an $i$ such that $(\emptyset, \sigma)$ and $(\{i\}, \sigma)$ are in the same orbit.  Thus if the orbits of $\A$ were uniformly hyperarithmetic, the set of extendible nodes in $Q$ would be hyperarithmetic.  But $Q$ is arithmetically isomorphic to $T$, the descending sequences through the Harrison order, and the set of extendible nodes of $T$ is not hyperarithmetic.
\end{proof}
In fact, one can make a similar argument to show that the orbit of $(\emptyset, \seq{})$ is not hyperarithmetic, and thus the Scott rank of $\A$ is $\wock+1$.\footnote{And even further, $\A$ has Scott complexity $\Pi_{\wock+2}$.}

\subsection*{Construction}
Fix $(\M_i)_{i \in \omega}$ a listing of all partial computable structures in our language.

We will perform a $\Pi^0_2$ priority construction on a tree of strategies.  We have strategies of type $N_\pi$ for $\pi \in T$, which are responsible for choosing the image of $\pi$ in $Q$ and ensuring that the $S_n$ are as described at this string.
We also have strategies of type $M_i$ for $i \in \omega$, which are responsible for constructing a computable isomorphism between $\A$ and $\M_i$ when $\A \cong \M_i$.  Finally, we have a global strategy $G$ which is responsible for ensuring that the $S_n$ are as described for those $\sigma \in \omega^{<\omega}$ not chosen by an $N_\pi$-strategy.

Each $N_\pi$-strategy will choose at most one string $\sigma$ to be the target for $\pi$.  If $\tau$ is an $N_\pi$-strategy which chooses $\sigma$ as the image of $\pi$, we say that $\tau$ has {\em chosen} $\sigma$.  No string $\sigma$ will be chosen by more than one strategy, and it will be the case that if $\sigma \in s^{<s}$ has not been chosen by the beginning of stage $s$, then it will never be chosen, and thus will not be in $Q$.  

We arrange the $N_\pi$ and $M_i$ in some priority ordering of order type $\omega$, such that for $\pi_0 \subset \pi_1 \in T$, $N_{\pi_0}$ occurs before $N_{\pi_1}$ in the ordering, and such that $N_{\seq{}}$ occurs first in the ordering.  The set of possible outcomes for an $M_i$-strategy is $\{ k, \infty_k : k \in \omega\} \cup \{\infty_\infty\}$.  We call the outcomes of the form $\infty_k$ for $k \in \omega \cup \{\infty\}$ the {\em infinite outcomes}.  Each $N_\pi$-strategy will have only a single outcome: $\outcome$.  We construct a priority tree of these strategies.

\smallskip

{\em Strategy for $G$.} At the end of stage $s$, for every $\sigma \in s^{<s}$ not yet chosen by any strategy, we declare $S_0( (\emptyset, \sigma) )$.

\smallskip

{\em Growing a string $\sigma$.}  At a stage $s$, a strategy may declare that $\sigma$ is growing.  When this happens, we let $n$ be largest such that we have already declared $S_n( (\emptyset, \sigma))$ (or $n = -1$ if there is no such $n$).  We declare $S_{k}( (\emptyset, \sigma))$ for $0 \le k < n+2$.  We also declare $S_k( (F, \sigma))$ for every $F \subseteq s$ and $0 \le k < n$.

\smallskip

{\em Strategy for $N_{\seq{}}$.}  This strategy is at the root of the priority tree and is visited at every stage.  It declares $\seq{}$ to be the image of $\seq{}$.  At every stage $s$, when this strategy is visited, the string $\seq{}$ grows.  This strategy always takes its unique outcome of $\outcome$.

\smallskip

{\em Strategy for $N_{\pi\cat i}$.}  Suppose $\tau$ is a strategy on the priority tree for requirement $N_{\pi\cat i}$.  Then there is a unique $\rho \subset \tau$ which is a strategy for $N_\pi$.  It has declared some unique string $\sigma$ to be the image of $\pi$.  Let $s_0$ be the first stage at which $\tau$ is visited.  At this stage, we choose an $m > s_0$ which has not yet been mentioned in the construction and declare $\sigma\cat m$ to be the image of $\pi\cat i$.  

At any stage when $\tau$ is visited, $\sigma\cat m$ grows.

At every stage $\tau$ is visited, it takes its unique outcome of $\outcome$.

\smallskip

{\em An auxiliary definition.} For every $\sigma$ and $s$ with $\sigma \in (s-1)^{<s-1}$, we define $n_\sigma(s)$ to be the largest $n$ such that we have declared $S_n( (\emptyset,\sigma))$ by the start of stage $s$.  By our action for $G$ and the $N_\pi$-strategies, $n_\sigma(s)$ is always defined.

\smallskip

{\em Strategy for $M_i$.}  Suppose $\tau$ is a strategy on the priority tree for requirement $M_i$.  Let $C_\tau$ be those strings $\sigma$ which are chosen by some $N_\pi$-strategy $\rho$ with $\rho \subset \tau$.  It will be the case that $C_\tau$ is fully determined by the first stage at which $\tau$ is visited.  We construct a partial computable function $f_\tau: \omega^{<\omega} \to \M_i$.  We will also define sequences $x_\tau(\sigma, s)$ for $\sigma \in C_\tau$.  If $\M_i \cong \A$ and $\tau$ is on the true path, we will use $f_\tau$ and these sequences to post hoc construct an isomorphism between $\A$ and $\M_i$.

When $\tau$ is visited at stage $s$, let $k_0$ be the number of times $\tau$ has previously taken an infinite outcome, and let $t<s$ be the last stage at which $\tau$ took an infinite outcome ($t = 0$ if there is no such stage).  Let $k_1$ be the number of times $\tau$ has previously taken outcome $\infty_\infty$.  Let $B_\tau(s)$ consist of every string $\sigma \in t^{<t}$ except for those which have been chosen by some $N_\pi$-strategy $\rho$ with $\rho \supseteq \tau\cat k_0$ or $\rho \subset \tau$.  For each $\sigma \in B_\tau(s)$, if $f_\tau( \sigma )$ has not yet been defined, we search for $s$ steps for an element $x \in \M_i$ with $W_\sigma(x)$ and $S_{n_\sigma(t+1)}(x)$.  If we find such an element, we define $f_\tau( \sigma ) = x$.

If there is at least one $\sigma \in B_\tau(s)$ for which at least one of the following fails, then we finish our action for $\tau$ at stage $s$ and take outcome $k_0$:
\begin{itemize}
\item $f_\tau( \sigma)$ is defined;
\item $\M_i \models S_{n_\sigma(t+1)}( f_\tau(\sigma))$; and
\item For every $j$ with $\sigma\cat j \in B_\tau(s)$, $\M_i \models P( f_\tau(\sigma), f_\tau(\sigma\cat j))$,
\end{itemize}

If the above holds for every $\sigma \in B_\tau(s)$, we will have an infinite outcome at stage $s$, but it remains to determine which.  First, let $D_\tau(s)$ consist of those strings $\sigma \in B_\tau(s)$ which have not been chosen by a strategy extending $\tau\cat \infty_\infty$.  For each $\sigma \in C_\tau$, we let $x_\tau(\sigma, s)$ be the oldest element $x$ of $\M_i$ such that $\M_i \models W_\sigma(x) \wedge P(x, f_\tau(\sigma\cat j))$ for every $\sigma\cat j \in D_\tau(s)$, if we see such an $x$ at stage $s$, and we leave $x_\tau(\sigma, s)$ undefined if there is no such $x$.

If, for every $\sigma \in C_\tau$, both $x_\tau(\sigma, s)$ and $x_\tau(\sigma, t)$ are defined, and $x_\tau(\sigma,s) = x_\tau(\sigma, t)$, then we take outcome $\infty_{k_1}$.  Otherwise, we take outcome $\infty_\infty$.

\smallskip

{\em Running the construction.}  At stage $s$, we begin by visiting the strategy $\seq{}$.  After having visited a strategy $\tau$ at stage $s$ with $|\tau| < s$, we let $k$ be the outcome taken by $\tau$ at this stage.  We next visit the strategy $\tau\cat k$.   After having visited a strategy $\tau$ with $|\tau| = s$, we run the global strategy and then end the stage.

We put the lexicographic ordering on the tree of strategies, where we order the outcomes of $M_i$ strategies as $\infty_\infty < \dots < \infty_2 < \infty_1 < \infty_0 < \dots < 2 < 1 < 0$.  Observe that our construction has the property that if a strategy $\tau$ is visited, and then at a later stage a strategy $\rho$ which is lexicographically to the left of $\tau$ is visited, $\tau$ can never again be visited.

\subsection*{The true path} We define the true path recursively, maintaining the inductive hypothesis that every strategy on the true path is visited infinitely often.  $\seq{}$ is on the true path.  If $\tau$ is on the true path and $\tau$ is an $N_\pi$-strategy, then $\tau\cat\outcome$ is on the true path.

If $\tau$ is on the true path and $\tau$ is an $M_i$-strategy, let $\alpha$ be the leftmost outcome which $\tau$ takes infinitely often.  Observe that this necessarily exists by our action for $\tau$.  Then $\tau\cat \alpha$ is on the true path.  We call $\alpha$ the {\em true outcome} of $\tau$.

By the earlier observation, no strategy off the true path is visited infinitely often.  Observe that the true path is computable from $\emptyset''$.

\subsection*{Verification} We begin by defining $Q$ and an isomorphism $\phi: T \to Q$.  For $\pi \in T$, let $\tau$ be the unique $N_\pi$-strategy along the true path.  We define $\phi(\pi)$ to be the string which $\tau$ declared to be the image of $\pi$.  As the true path is $\Delta^0_3$, $\phi$ is arithmetic.  We then define $Q$ to be the image of $T$ under $\phi$.  It is a straightforward induction to show that $Q$ is a tree and $\phi$ is an isomorphism.

\begin{claim}
If $\sigma \in Q$, then $\A \models S_n(F, \sigma)$ for every $F \in [\omega]^{<\omega}$.

If $\sigma \not \in Q$, then $n_\sigma = \lim_s n_\sigma(s)$ exists, and $\A \models S_{n_\sigma}(F, \sigma)$ iff $F = \emptyset$.
\end{claim}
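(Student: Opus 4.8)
The plan is to analyze, for each string $\sigma \in \omega^{<\omega}$, which strategies ever declare $S_n((F,\sigma))$ and for which $F$, and then to split into the two cases according to whether $\sigma$ is chosen by an $N_\pi$-strategy on the true path. First I would handle the easier case $\sigma \notin Q$. There are two sub-cases. If $\sigma$ is never chosen by any strategy at all, then (by the remark in the construction that an unchosen string stays unchosen) only the global strategy $G$ ever acts on $\sigma$, repeatedly declaring $S_0((\emptyset,\sigma))$; so $n_\sigma(s) = 0$ for all $s$, the limit is $0$, and $\A \models S_0(F,\sigma)$ iff $F = \emptyset$. If instead $\sigma$ is chosen by some $N_\pi$-strategy $\rho$ that is \emph{not} on the true path, then $\rho$ is visited only finitely often, so $\sigma$ grows only finitely many times; after the last such stage, the "growing" action has pushed $n_\sigma$ up to some fixed value $n_\sigma$, and crucially the growing action only ever declares $S_k((F,\sigma))$ for $F \neq \emptyset$ when $k < n$ where $n$ was the \emph{previous} value — so after the last growth, no further $S_k((F,\sigma))$ for $F\neq\emptyset$ is ever declared beyond those already present, and a bookkeeping check shows these are exactly the ones with $k$ below the final $n_\sigma$ value, but since $n_\sigma$ is the \emph{largest} such index one must be careful: I would argue that at the last growth stage the value jumps from $n-1$ to $n+1$ for the $\emptyset$-column but only to $n$ for the $F\neq\emptyset$ columns, hence $S_{n_\sigma}(F,\sigma)$ holds only for $F=\emptyset$, and $n_\sigma(s)$ stabilizes. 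Also $G$ may continue declaring $S_0((\emptyset,\sigma))$ forever if $\rho$ stops being visited, but this does not change $n_\sigma$ once it exceeds $0$. (If $\rho$ acts at least once, $n_\sigma \ge$ the value it reaches.)

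For the case $\sigma \in Q$, fix the $N_\pi$-strategy $\tau$ on the true path with $\phi(\pi) = \sigma$; by the inductive hypothesis defining the true path, $\tau$ is visited infinitely often. Each time $\tau$ is visited, $\sigma$ grows. I would show that repeated growing drives every column to be fully populated: each growth takes the current largest $n$ with $S_n((\emptyset,\sigma))$ declared and declares $S_k((\emptyset,\sigma))$ for all $k < n+2$ — so the $\emptyset$-column index $n_\sigma(s)$ strictly increases (by at least $1$, in fact reaching $n+1$) at each visit, hence $\to\infty$; and simultaneously it declares $S_k((F,\sigma))$ for all $F \subseteq s$ and all $k < n$, where $n = n_\sigma$ at that visit. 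Since $n_\sigma \to \infty$ and every finite $F$ is eventually $\subseteq s$, every pair $(F,k)$ with $F \in [\omega]^{<\omega}$ and $k \in \omega$ eventually gets $S_k((F,\sigma))$ declared. Because declarations are permanent (the $S_n$ are c.e.), $\A \models S_n(F,\sigma)$ for every $n$ and every $F$, as required.

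The main obstacle I anticipate is the careful bookkeeping in the $\sigma \notin Q$ case with a strategy $\rho$ chosen-but-off-the-true-path: one must verify precisely that the "growing" action's asymmetry between the $\emptyset$-column (pushed to $n+1$) and the $F \neq \emptyset$ columns (pushed only to $n$, i.e.\ one level behind) guarantees that at the final growth stage the top index $n_\sigma$ of the $\emptyset$-column is \emph{not} achieved in any $F\neq\emptyset$ column, so that $\A\models S_{n_\sigma}(F,\sigma)$ fails for $F\neq\emptyset$ while holding for $F=\emptyset$, and also that no other strategy (in particular no $M_i$-strategy, which only reads the $S_n$, never declares them, and no descendant $N_{\pi'}$-strategy, whose chosen strings are proper extensions $\sigma\cat m$) ever declares any $S_n((F,\sigma))$. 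Everything else is a routine appeal to the finite-injury structure (off-true-path strategies act finitely often) and the permanence of c.e.\ declarations.
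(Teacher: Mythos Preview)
Your proposal is correct and follows the same case analysis as the paper's proof. Two minor corrections that do not affect your conclusion: the growing action pushes the $F\neq\emptyset$ columns only to index $n-1$ (two levels behind the $\emptyset$-column, not one), and $G$ acts only on \emph{unchosen} strings, so once an off-true-path strategy has chosen $\sigma$ the global strategy never touches it again.
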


\begin{proof}
If $\sigma \in Q$, then $\sigma$ was selected by some $N_\pi$-strategy $\tau$ along the true path.  As $\tau$ is visited infinitely often, our action for $\tau$ ensures that $\A \models S_n(F, \sigma)$ for every $F \in [\omega]^{<\omega}$.

If $\sigma \not \in Q$, then either $\sigma$ was never chosen by an $N_\pi$-strategy, or it was chosen by one off the true path.  In the former case, $n_\sigma = 0$, and we never declared $S_0((F, \sigma))$ for any $F \neq \emptyset$.  In the latter case, the strategy was only visited finitely often, so $n_\sigma$ exists.  By construction, we have not declared $S_{n_\sigma}((F, \sigma))$ for any $F \neq \emptyset$.
\end{proof}

Next, we must show that $\A$ is computably categorical.

\begin{claim}
Fix $\tau$ an $M_i$-strategy.

If $\tau$ is visited at stages $s_0 < s_1$, then $B_\tau(s_0) \subseteq B_\tau(s_1)$.

Further, if there is no stage $t \in [s_0, s_1)$ at which $\tau$ takes an infinite outcome, then $B_\tau(s_0) = B_\tau(s_1)$.
\end{claim}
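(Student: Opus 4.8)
The plan is to show that $B_\tau(s)$ depends on $s$ only through the two parameters $t$ and $k_0$ computed in its definition, and that those parameters are monotone in $s$. Two small bookkeeping facts drive everything. \emph{Freshness:} when an $N_\pi$-strategy $\rho$ is first visited, at a stage $v$ say, it chooses a string $\sigma\cat m$ with $m>v$; so every string ever chosen by $\rho$ has a symbol exceeding $v$, and in particular lies in no $u^{<u}$ with $u\le v$. \emph{Timing:} whenever $\tau$ takes a finite outcome, it takes the outcome equal to the number of infinite outcomes it has so far taken; hence $\tau$ takes the finite outcome $j$ only at stages strictly after the stage of its $j$-th infinite outcome, and therefore any strategy $\rho\supseteq\tau\cat j$ (one must pass through $\tau\cat j$ to reach it) is first visited strictly after that stage. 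Combining the two: if $t$ and $k_0$ are the parameters read off at a visit-stage $s$, then every strategy extending $\tau\cat k_0$ is first visited after stage $t$, hence never chooses a string in $t^{<t}$. Separately, every $\rho\subsetneq\tau$ is visited at every stage $\tau$ is, so the set of strings chosen by strategies properly below $\tau$ is already fixed at the first visit to $\tau$.

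With these in hand, I would fix visit-stages $s_0<s_1$ with parameters $t_b,k_b$. Since $s_0\le s_1$ we get $t_0\le t_1$ (and $k_0\le k_1$), so $t_0^{<t_0}\subseteq t_1^{<t_1}$. For $\sigma\in B_\tau(s_0)$ I would check it is not excluded from $B_\tau(s_1)$: it is not chosen by any $\rho\subsetneq\tau$, because that set of chosen strings is the same at $s_1$ as at $s_0$ and $\sigma$ avoided it; and it is not chosen by any $\rho\supseteq\tau\cat k_1$, because such a $\rho$ is first visited after stage $t_1\ge t_0$ and hence never chooses anything in $t_0^{<t_0}\ni\sigma$. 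This gives $B_\tau(s_0)\subseteq B_\tau(s_1)$.

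For the sharper claim, if $\tau$ takes no infinite outcome at any stage in $[s_0,s_1)$, then the infinite outcomes taken before $s_1$ are exactly those taken before $s_0$, so $t_0=t_1$ and $k_0=k_1$; writing $t,k$ for the common values, the same two facts show both $B_\tau(s_0)$ and $B_\tau(s_1)$ equal the set of $\sigma\in t^{<t}$ chosen by no $N_\pi$-strategy properly below $\tau$ (the exclusion by strategies extending $\tau\cat k$ is vacuous on $t^{<t}$, and the exclusion by strategies below $\tau$ is the same at both stages), so they coincide.

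The step I expect to require the most care is the timing fact and its consequence: correctly matching the finite-outcome label $k_0$ to the count of prior infinite outcomes, and extracting the \emph{strict} inequality ``$\rho$ first visited after stage $t$'' rather than merely ``at stage $t$,'' since that strictness is exactly what makes the $\tau\cat k_0$-exclusion harmless on $t^{<t}$. The superficially worrying possibility — that $N_\pi$-strategies activated between $s_0$ and $s_1$ might \emph{remove} elements from $B_\tau$ — is then dispatched by the freshness fact, since any string such a strategy chooses has a symbol too large to have lain in $t_0^{<t_0}$.
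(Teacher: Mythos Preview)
Your proof is correct and follows essentially the same approach as the paper's. The paper argues directly, using inline the fact that a string in $t_0^{<t_0}$ cannot be chosen after stage $t_0$ together with the inaccessibility of $\tau\cat k_1$ before $s_0$, while you isolate these as your ``freshness'' and ``timing'' lemmas before applying them; the underlying content is the same.
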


\begin{proof}
For $i < 2$, let $k_{i}$ be the number of times $\tau$ took an infinite outcome before stage $s_i$, and let $t_i$ be the last stage before $s_i$ at which $\tau$ took an infinite outcome ($t_i = 0$ if there is no such stage).  By definition, if there were some $\sigma \in B_\tau(s_0) - B_\tau(s_1)$, then it would have to be the case that $\sigma \in t_0^{<t_0}$ (and so $t_0 > 0$), $\sigma$ was selected by some strategy extending $\tau\cat k_1$, and $k_0 \neq k_1$.  But no strategy extending $\tau\cat k_1$ is accessible before stage $s_0$, and $\sigma$ cannot be selected after stage $t_0$, and $t_0 < s_0$.

By definition, if there is $\sigma \in B_\tau(s_1) - B_\tau(s_0)$ then either $t_1 > t_0$ or $\sigma$ was selected by a strategy extending $\tau\cat k_0$ which does not extend $\tau\cat k_1$, and thus $k_0 \neq k_1$ (which again implies $t_1 > t_0$).  In either case, we see that there is some stage $t \in [s_0, s_1)$ at which $\tau$ took an infinite outcome.
\end{proof}

\begin{claim}
Fix $\tau$ the $M_i$-strategy along the true path.  If $\tau$'s true outcome is some $k \in \omega$, then $\A \not \cong \M_i$.
\end{claim}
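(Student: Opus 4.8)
The plan is to suppose, for a contradiction, that $\A\cong\M_i$, fix an isomorphism $g\colon\A\to\M_i$, and argue that $\tau$ then cannot return outcome $k$ cofinally, because the obstruction causing it to return a finite outcome can always be refuted using $g$.

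First I would fix the eventual behaviour of $\tau$. Since $\tau$'s true outcome is the finite $k$ and every infinite outcome lies to the left of every finite one, $\tau$ takes an infinite outcome only finitely often; let $t^*$ be the last such stage. (The case $t^*=0$ is impossible here, for then $t=0$ and $B_\tau(s)=\emptyset$ at every visit, so the failure clause is vacuous and $\tau$ could never return a finite outcome, contradicting that its true outcome is $k$.) For all large stages $s$ at which $\tau$ is visited the parameters are $t=t^*$, $k_0=k$; by the earlier claim on $B_\tau$, $B_\tau(s)$ equals a fixed finite set $B^*\subseteq (t^*)^{<t^*}$; and $\tau$ returns outcome $k$. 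Observe that no $\sigma\in B^*$ lies in $Q$: a string of $Q$ is chosen by an $N$-strategy on the true path, and every such strategy is either an initial segment of $\tau$ or an extension of $\tau\cat k$, both excluded from $B^*$. Hence by the claim describing the $S_n$, each $\sigma\in B^*$ satisfies $\A\models S_{n_\sigma}(F,\sigma)$ exactly when $F=\emptyset$.

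The technical core is the claim that each $\sigma\in B^*$ has finished growing by stage $t^*$, so that $n_\sigma(t^*+1)=n_\sigma$, and that $f_\tau(\sigma)$ is only ever defined using this final level. If $\sigma$ was never chosen this is immediate; otherwise $\sigma\notin Q$ was chosen by an $N$-strategy $\rho$ off the true path, and $\sigma$ grows exactly at $\rho$'s finitely many visits. Let $\nu$ be the longest common initial segment of $\rho$ and $\tau\cat k$; then $\nu\subseteq\tau$ lies on the true path, its true outcome is the $b$ with $\nu\cat b\subseteq\tau\cat k$, and $\rho\supseteq\nu\cat a$ for some $a\neq b$, so (as $N$-strategies have a single outcome) $\nu$ is an $M$-strategy, possibly $\tau$ itself. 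I claim $\nu$ returns outcome $a$ for the last time by stage $t^*$. If $a$ is a finite outcome, $\nu$ returns $a$ exactly at the stages where its counter of previous infinite outcomes equals $a$, a single interval; since $\tau$ lies below $\nu\cat b$ it is not accessible anywhere strictly inside this interval, so the interval must end by $t^*$ (otherwise $\tau$ could not be accessible to take its last infinite outcome at $t^*$). If $a$ is infinite, a parallel analysis of $\nu$'s subscript counter shows the stages where $\nu$ returns $a$ all precede $\nu$'s last $\infty_\infty$-stage, which in turn precedes the first stage at which $\nu\cat b$, and hence $\tau$, is accessible -- so again $\rho$'s last visit is by $t^*$. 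A refinement of this bookkeeping (using that the $t$-parameter in force whenever $f_\tau(\sigma)$ gets defined is itself an infinite-outcome stage of $\tau$) yields the statement about the recorded level.

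Granting this, the conclusion is routine. For each $\sigma\in B^*$, $g((\emptyset,\sigma))$ witnesses $W_\sigma\wedge S_{n_\sigma}$ in $\M_i$, so $\tau$'s search succeeds and $f_\tau(\sigma)$ is eventually defined with $\M_i\models S_{n_\sigma}(f_\tau(\sigma))$; thus the definedness condition and the $S$-condition in $\tau$'s action eventually hold throughout $B^*$. Since $\tau$ nonetheless returns outcome $k$ at infinitely many large stages, the $P$-condition must fail at each; as $B^*$ is finite and the relevant values of $f_\tau$ stabilise, there is a fixed pair $\sigma,\sigma\cat j\in B^*$ with $\M_i\not\models P(f_\tau(\sigma),f_\tau(\sigma\cat j))$. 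Transporting back by $g^{-1}$ yields $(F,\sigma)$ and $(G,\sigma\cat j)$ in $\A$ with $\A\models S_{n_\sigma}(F,\sigma)\wedge S_{n_{\sigma\cat j}}(G,\sigma\cat j)$ and $\A\not\models P((F,\sigma),(G,\sigma\cat j))$. But $\sigma,\sigma\cat j\notin Q$, so $F=G=\emptyset$, and $P((\emptyset,\sigma),(\emptyset,\sigma\cat j))$ does hold in $\A$ (since $j\notin\emptyset$ and $|\emptyset|=0$ is even) -- a contradiction.

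I expect the main obstacle to be the combinatorial claim of the third paragraph: that the finitely many strings of $B^*$ stop growing soon enough that $f_\tau$ records them at their final level, so the $S$-condition cannot be the persistent reason $\tau$ keeps returning outcome $k$. This is a statement purely about the priority tree, and the delicacy lies with strategies below $\tau\cat\infty_\infty$, where one must exploit that $\tau$ is forced to be accessible at stage $t^*$.
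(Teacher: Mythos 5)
Your overall architecture matches the paper's: stabilize $B_\tau$ at a finite set $B^*$ after the last infinite-outcome stage $t^*$, note $B^*\cap Q=\emptyset$, and argue that an isomorphism would force all three clauses of $\tau$'s test to hold eventually. The setup, the observation that $t^*>0$, and the treatment of the $P$-clause are all fine. The gap is in your third paragraph, and the step you defer to ``a refinement of this bookkeeping'' is not merely unproved --- it is false. It is true that every $\sigma\in B^*$ stops growing by the end of stage $t^*$, so $n_\sigma(t^*+1)=n_\sigma$. It does \emph{not} follow that $f_\tau(\sigma)$ is defined using this final level. A string $\sigma\in B^*$ may have been chosen by an $N_\pi$-strategy $\rho$ with $\rho\supseteq\tau\cat\infty_{k_1}$: such strings are excluded from $B_\tau(s)$ only when the choosing strategy extends the current \emph{finite} outcome $\tau\cat k_0$ or lies below $\tau$, so they remain in $B^*$. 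Such a $\rho$ is visited only at infinite-outcome stages of $\tau$, possibly including stage $t^*$ itself (after $\tau$ has already acted there). So $f_\tau(\sigma)$ can be defined at a stage whose parameter $t'$ is an earlier infinite-outcome stage, at level $n_\sigma(t'+1)<n_\sigma$, and it is never redefined; your assertion that ``$\tau$'s search succeeds and $f_\tau(\sigma)$ is eventually defined with $\M_i\models S_{n_\sigma}(f_\tau(\sigma))$'' fails because the search only runs while $f_\tau(\sigma)$ is undefined. Hence the $S$-clause, not the $P$-clause, may be the persistent obstruction, and your final paragraph does not cover that case.

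The paper closes exactly this case with a separate argument that you need. Since $\tau$ took an infinite outcome at stage $t^*$, the test performed there certifies $\M_i\models S_{n_\sigma(t^*)}(f_\tau(\sigma))$ (the level used at $t^*$ equals $n_\sigma(t^*)$ because such a $\sigma$ grows only at infinite-outcome stages of $\tau$); the only remaining growth is the single one at stage $t^*$, so $n_\sigma=n_\sigma(t^*)+1$; and the unique element of $\A$ satisfying $W_\sigma\wedge S_{n_\sigma-1}$ is $(\emptyset,\sigma)$, which also satisfies $S_{n_\sigma}$. So if $\M_i\models\neg S_{n_\sigma}(f_\tau(\sigma))$, then $\M_i$ has an element satisfying $W_\sigma\wedge S_{n_\sigma-1}\wedge\neg S_{n_\sigma}$ while $\A$ has none, giving $\A\not\cong\M_i$. (In your formulation: $g^{-1}$, applied to $f_\tau(\sigma)$, must yield $(\emptyset,\sigma)$ because $f_\tau(\sigma)$ satisfies level $n_\sigma-1$, which then forces $\M_i\models S_{n_\sigma}(f_\tau(\sigma))$ after all.) With this lemma inserted, the rest of your argument goes through.
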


\begin{proof}
Note that $k$ is the number of times $\tau$ ever takes an infinite outcome.  Let $t$ be the last stage at which $\tau$ takes an infinite outcome (or $t = 0$ if there is no such stage).  Let $\alpha$ be the infinite outcome taken at stage $t$, if there is one.  Let $s_0$ be the first stage after $t$ at which $\tau$ is visited.  Then for every stage $s > s_0$ at which $\tau$ is visited, $B_\tau(s) = B_\tau(s_0)$.

Observe that for every $\sigma \in B_\tau(s_0)$, and every $s > s_0$, $n_\sigma(s) = n_\sigma(s_0)$.  For if $\sigma \in t^{<t}$ was not chosen by any strategy by stage $t$, then it will never be chosen, and our action for $G$ makes $n_\sigma(s) = 0$ for every $s > t$.  If instead $\sigma$ is chosen by a strategy by stage $t$, it must be a strategy which is incomparable with $\tau\cat k$ by the definition of $B_\tau(s_0)$.  Such a strategy can never be visited at or after stage $s_0$, and so $n_\sigma(s)$ will not change after $s_0$.

For $\sigma \in B_\tau(s_0)$, we write $n_\sigma$ for $\lim_s n_\sigma(s) = n_\sigma(s_0)$.  By construction, for every $\sigma \in B_\tau(s_0)$, and every $F \neq \emptyset$, $\A \models \neg S_{n_\sigma}((F,\sigma)) \wedge \neg S_{n_\sigma -1}((F,\sigma))$.  There is some $\sigma \in B_\tau(s_0)$ which prevents $\tau$ from taking an infinite outcome after stage $t$.  We consider the various cases.

If $f_\tau(\sigma)$ is never defined, then there is no $x \in \M_i$ with $W_\sigma(x)$ and $S_{n_\sigma}(x)$.  On the other hand, $\A \models W_\sigma((\emptyset, \sigma)) \wedge S_{n_\sigma}((\emptyset, \sigma))$, so $\A \not \cong \M_i$.

If $f_\tau(\sigma)$ is defined but $\M_i \models \neg S_{n_\sigma}(f_\tau(\sigma))$, then it must be that $f_\tau(\sigma)$ was already defined by stage $t$, and thus $\sigma \in B_\tau(t)$.  For if not, then $f_\tau(\sigma)$ was defined at some stage $s \ge s_0$, and by construction we would have $\M_i \models S_{n_\sigma}(f_\tau(\sigma))$.  By the fact that $f_\tau(\sigma)$ is defined, we know that $\M_i \models W_\sigma(f_\tau(\sigma))$.  As $B_\tau(0)$ is empty, we know that $t > 0$ and thus $\tau$ had infinite outcome at stage $t$.

$\M_i \models S_{n_\sigma(t)}(f_\tau(\sigma))$ by the fact that $\tau$ had an infinite outcome at stage $t$.  Since $n_\sigma \neq n_\sigma(t)$, $\sigma$ must have been chosen by a strategy $\rho$ which is visited in the interval $[t,s)$.  By the fact that $\sigma \in B_\tau(t) \subseteq t^{<t}$, we know that $\rho$ chose $\sigma$, and thus was visited, at or before stage $t$.  Since it is also visited at or after stage $t$, and $\tau$ has infinite outcome $\alpha$ at stage $t$, $\rho$ must be comparable with $\tau\cat \alpha$.  By definition of $B_\tau(t)$, $\rho \not \subseteq \tau$.  Thus $\rho \supseteq \tau\cat\alpha$.  So $\rho$ is visited at stage $t$ and at no stage after $t$.

By our action for $G$, it follows that $n_\sigma = n_\sigma(t)+1$.  Further, the only element $y \in \A$ with $\A \models W_\sigma(y) \wedge S_{n_\sigma(t)}(y)$ is $(\emptyset, \sigma)$, and $\A \models S_{n_\sigma}( (\emptyset, \sigma))$.  So there is no element $y \in \A$ with $\A \models W_\sigma(y) \wedge S_{n_\sigma(t)}(y) \wedge \neg S_{n_\sigma}(y)$, and so $\A \not \cong \M_i$.

Finally, suppose there is $\sigma\cat j \in B_\tau(s_0)$ with $f_\tau(\sigma), f_\tau(\sigma\cat j)$ both defined, and
\[
\M_i \models W_\sigma(f_\tau(\sigma)) \wedge W_{\sigma\cat j}(f_\tau(\sigma\cat j)) \wedge S_{n_\sigma}(f_\tau(\sigma)) \wedge S_{n_{\sigma\cat j}}(f_\tau(\sigma\cat j)) \wedge \neg P( f_\tau(\sigma), f_\tau(\sigma\cat j)).
\]
As $\sigma\cat j \in B_\tau(s_0)$, $n_{\sigma\cat j} = n_{\sigma\cat j}(s_0)$.  So $(\emptyset, \sigma)$ and $(\emptyset,\sigma\cat j)$ are the only $y_0, y_1 \in \A$ satisfying
\[
\A \models W_\sigma(y_0) \wedge W_{\sigma\cat j}(y_1) \wedge S_{n_\sigma}(y_0) \wedge S_{n_{\sigma\cat j}}(y_1),
\]
and $\A \models P((\emptyset, \sigma), (\emptyset, \sigma\cat j))$.  Thus $\A \not \cong \M_i$.
\end{proof}

\begin{claim}
Fix $\tau$ the $M_i$-strategy along the true path.  If $h: \M_i \cong \A$, then for all $\sigma \not \in Q$, $h\circ f_\tau(\sigma) = (\emptyset, \sigma)$.
\end{claim}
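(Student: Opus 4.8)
The plan is to read the statement off from the observation that, since $h \colon \M_i \cong \A$, the preceding claim rules out $\tau$ having a finite true outcome $k \in \omega$; hence the true outcome of $\tau$ is an infinite outcome, and so $\tau$ takes an infinite outcome at infinitely many stages $s_1 < s_2 < \cdots$. The feature of these stages that I will exploit is that whenever $\tau$ takes an infinite outcome at a stage $s$, every $\sigma' \in B_\tau(s)$ passes all of $\tau$'s tests; in particular $f_\tau(\sigma')$ is defined and $\M_i \models S_{n_{\sigma'}(t+1)}(f_\tau(\sigma'))$, where $t$ is the last infinite-outcome stage of $\tau$ before $s$. Note that along the sequence $s_1 < s_2 < \cdots$ the associated parameters $t$ tend to infinity.

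Now fix $\sigma \notin Q$. By the earlier claim, $n_\sigma = \lim_s n_\sigma(s)$ exists, and in $\A$ the only element $y$ with $W_\sigma(y) \wedge S_{n_\sigma}(y)$ is $(\emptyset,\sigma)$: the elements satisfying $W_\sigma$ are exactly the $(F,\sigma)$, and $\A \models S_{n_\sigma}(F,\sigma)$ iff $F = \emptyset$. Since $f_\tau(\sigma)$, whenever defined, is declared only after confirming $W_\sigma$ of it, and since $h$ is an isomorphism, it therefore suffices to find one stage $s_k$ at which $f_\tau(\sigma)$ is defined and $\M_i \models S_{n_\sigma}(f_\tau(\sigma))$; then $h(f_\tau(\sigma))$ is the unique element of $\A$ satisfying $W_\sigma$ and $S_{n_\sigma}$, namely $(\emptyset,\sigma)$.

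To obtain such a $k$, I will check that $\sigma \in B_\tau(s_k)$ for all sufficiently large $k$. For large $k$ the parameter $t$ associated to $s_k$ exceeds the length and all entries of $\sigma$, so $\sigma \in t^{<t}$, and it remains to see that $\sigma$ is not deleted from $B_\tau(s_k)$. It is not chosen by any $N_\pi$-strategy $\rho \subset \tau$: such $\rho$ lies on the true path, which would force $\sigma \in Q$. And if $\sigma$ is chosen at all, it is chosen by a unique strategy $\rho$, which extends $\tau$ through at most one outcome, so the deletion condition ``$\rho \supseteq \tau\cat k_0$, with $k_0$ the number of infinite outcomes $\tau$ has previously taken'' can be met for at most one value of $k$. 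Finally, for all large $k$ we have $n_\sigma(t+1) = n_\sigma$, since $n_\sigma$ has stabilized by stage $t$. Choosing such a $k$, the tests passed at $s_k$ give $f_\tau(\sigma)$ defined with $\M_i \models S_{n_\sigma}(f_\tau(\sigma))$, as required.

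The one point that looks dangerous is that $f_\tau(\sigma)$ may first be defined at an early stage, before $n_\sigma$ reaches its limit, and thus chosen as a witness to $S_m(f_\tau(\sigma))$ for some $m < n_\sigma$; in $\A$ a point $(F,\sigma)$ with $F \neq \emptyset$ can satisfy $S_m$ for small $m$, so such a witness need not be the preimage of $(\emptyset,\sigma)$. The resolution --- and the reason the proof is short --- is that this early value never has to be controlled: because $\tau$ takes infinitely many infinite outcomes, at a later stage $s_k$ the test $\M_i \models S_{n_\sigma(t+1)}(f_\tau(\sigma))$ is passed with the limit value $n_\sigma(t+1) = n_\sigma$, and $W_\sigma \wedge S_{n_\sigma}$ already pins down $(\emptyset,\sigma)$ in $\A$. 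So the only real work is the bookkeeping of the previous paragraph, whose crux is simply that a fixed strategy extends $\tau$ through at most one outcome.
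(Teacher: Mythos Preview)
Your argument is correct and follows essentially the same route as the paper: use the preceding claim to rule out a finite true outcome, observe that the infinitely many infinite-outcome stages force $\M_i \models W_\sigma(f_\tau(\sigma)) \wedge S_{n_\sigma}(f_\tau(\sigma))$ for each $\sigma \notin Q$, and then invoke the uniqueness of $(\emptyset,\sigma)$ as the $W_\sigma \wedge S_{n_\sigma}$-element in $\A$. The paper's own proof is terser, asserting directly that $f_\tau(\sigma)$ is defined (since $\sigma \notin Q$ implies $\sigma \notin C_\tau$) and that the $S_{n_\sigma}$-test is eventually passed; you have simply spelled out the bookkeeping behind that assertion, in particular the check that $\sigma$ eventually lands in $B_\tau(s_k)$ with $n_\sigma(t+1)=n_\sigma$.
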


\begin{proof}
As $\M_i \cong \A$, $\tau$'s true outcome must be an infinite outcome.  Thus every $\sigma \not \in C_\tau$ is in the domain of $f_\tau$, and in particular every $\sigma \not \in Q$ is in the domain of $f_\tau$.

Since $\tau$ takes infinite outcomes at infinitely many stages, for every $\sigma \not \in Q$, $\M_i \models W_\sigma(f_\tau(\sigma)) \wedge S_{n_\sigma}(f_\tau(\sigma))$.  For $\sigma \not \in Q$, $(\emptyset, \sigma)$ is the only element of $\A$ satisfying both $W_\sigma$ and $S_{n_\sigma}$, so for $h$ to be an isomorphism, it must be that $h\circ f_\tau(\sigma) = (\emptyset, \sigma)$.
\end{proof}

\begin{claim}
Fix $\tau$ the $M_i$-strategy along the true path.  If $\tau$'s true outcome is $\infty_\infty$, then $\A \not \cong \M_i$.
\end{claim}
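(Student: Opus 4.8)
The plan is to argue by contradiction: suppose $h \colon \M_i \cong \A$, and show that $\tau$ takes the outcome $\infty_\infty$ at only finitely many stages, contradicting the assumption that $\infty_\infty$ is the true outcome of $\tau$. By the action for an $M_i$-strategy, at an infinite-outcome stage $s$ with preceding infinite-outcome stage $t$, $\tau$ takes $\infty_\infty$ exactly when some $\sigma \in C_\tau$ has $x_\tau(\sigma,s)$ or $x_\tau(\sigma,t)$ undefined, or has $x_\tau(\sigma,s)\neq x_\tau(\sigma,t)$. Since $C_\tau$ is finite, it suffices to show that for each $\sigma \in C_\tau$, the value $x_\tau(\sigma,s)$ is eventually defined and eventually constant along the stages at which $\tau$ takes an infinite outcome.

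Fix $\sigma \in C_\tau$; note $C_\tau \subseteq Q$, so the earlier claims apply to $\sigma$, and in particular $h \circ f_\tau(\sigma') = (\emptyset,\sigma')$ for every $\sigma' \notin Q$. The first and most delicate step is a bookkeeping analysis of $D_\tau(s)$: I would check that (i) at no stage does $D_\tau(s)$ contain a child $\sigma\cat j$ with $\sigma\cat j \in Q$, and (ii) for each fixed $j$, whether $\sigma\cat j \in D_\tau(s)$ stabilizes as $s$ ranges over the stages at which $\tau$ acts. For (i): if $\sigma\cat j \in Q$ it is chosen by the true-path $N$-strategy $\rho'$ assigned to $\phi^{-1}(\sigma\cat j)$; since $\rho'$ and $\tau$ are comparable on the linearly ordered true path and $\rho' \neq \tau$, either $\rho' \subsetneq \tau$, in which case $\rho'$ is visited before $\tau$ at any stage where $\tau$ acts, so $\sigma\cat j$ is already chosen and hence excluded from $B_\tau(s)$; or $\tau \subsetneq \rho'$, in which case $\rho'$ extends $\tau\cat\infty_\infty$ and the string $\sigma\cat j$ it picks exceeds the first stage $\rho'$ is visited, so comparing this stage with the requirement $\sigma\cat j \in t^{<t}$ shows $\sigma\cat j$ is outside $B_\tau(s)$ before $\rho'$ chooses it and outside $D_\tau(s)$ (as a string chosen by an extension of $\tau\cat\infty_\infty$) afterwards. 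For (ii): the exclusion clauses in the definitions of $B_\tau(s)$ and $D_\tau(s)$ can each affect $\sigma\cat j$ only for a bounded range of the relevant parameter (the value $k_0(s)$, which tends to infinity, and the single strategy, if any, that ever chooses $\sigma\cat j$), and $\sigma\cat j \in t^{<t}$ holds for all large $t$; so $\sigma\cat j$ is eventually permanently in or permanently out of $D_\tau(s)$.

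Next I would transfer the condition defining $x_\tau(\sigma,s)$ through $h$. By (i), every $\sigma\cat j \in D_\tau(s)$ satisfies $\sigma\cat j \notin Q$, hence $h \circ f_\tau(\sigma\cat j) = (\emptyset, \sigma\cat j)$; and for $x$ with $h(x) = (F,\sigma)$ one has $\A \models P((F,\sigma),(\emptyset,\sigma\cat j))$ iff $j \notin F$. So $x_\tau(\sigma,s)$ is the oldest $x$, seen by stage $s$, with $\M_i \models W_\sigma(x)$ and $h(x) = (F,\sigma)$ for some $F$ disjoint from $\{\, j : \sigma\cat j \in D_\tau(s)\,\}$. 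The element $h^{-1}((\emptyset,\sigma))$ meets this description, so for all large $s$, $x_\tau(\sigma,s)$ is defined and its index in $\M_i$ is at most the index of $h^{-1}((\emptyset,\sigma))$.

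Finally, a stabilization argument: only finitely many $x \in \M_i$ have index at most that of $h^{-1}((\emptyset,\sigma))$, each such $x$ with $\M_i \models W_\sigma(x)$ has an associated finite set $F$, and by (ii) the condition ``$F$ is disjoint from $\{\, j : \sigma\cat j \in D_\tau(s)\,\}$'' stabilizes for each such $x$ as $s \to \infty$. Hence for all large $s$, $x_\tau(\sigma,s)$ equals a single fixed element, namely the oldest $x$ of index at most this bound whose $F$ is disjoint from the limiting set. This yields the required stabilization for every $\sigma\in C_\tau$, so $\tau$ takes $\infty_\infty$ only finitely often, and the contradiction follows; therefore $\A \not\cong \M_i$. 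I expect the main obstacle to be part (i) of the bookkeeping: ruling out that a node of $Q$ transiently appears in $D_\tau(s)$ before the strategy responsible for it has been visited, since it is precisely this that lets the preceding claim be applied and keeps $h^{-1}((\emptyset,\sigma))$ a legitimate candidate for $x_\tau(\sigma,s)$.
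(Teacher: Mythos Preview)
Your argument is correct and hits the same two key points the paper uses: that $D_\tau(s)$ never contains a child $\sigma\cat j \in Q$, and that consequently $h^{-1}((\emptyset,\sigma))$ is always a legitimate candidate for $x_\tau(\sigma,s)$. The paper's route from there is slightly more streamlined than yours. Rather than proving your point~(ii) (pointwise stabilization of membership in $D_\tau(s)$) and then arguing about finitely many candidates below the bound, the paper simply observes that $D_\tau(t_s)$ is monotone increasing in $s$; hence if $x_\tau(\sigma,t_{s_0})$ and $x_\tau(\sigma,t_{s_1})$ differ for $s_0<s_1$, the later one must be older (i.e.\ have larger index), since the later candidate also qualifies at the earlier stage. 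It then concludes that if $x_\tau(\sigma,t_s)$ fails to converge, no single $x$ works for all of $D_\tau=\bigcup_s D_\tau(t_s)$, which directly contradicts the existence of $h^{-1}((\emptyset,\sigma))$. Your stabilization argument is a perfectly valid alternative and in fact proves a bit more than the paper needs; the monotonicity shortcut just avoids the casework in~(ii).
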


\begin{proof}
Let $t_0 < t_1 < \dots$ be the stages at which $\tau$ takes an infinite outcome.  Then it must be that for some $\sigma \in C_\tau$, $\lim_s x_\tau(\sigma, t_s)$ does not exist.  Fix such a $\sigma$.  Note that if $s_0 < s_1$ and $x_\tau(\sigma, t_{s_0}), x_\tau(\sigma, t_{s_1})$ are both defined but unequal, then $x_\tau(\sigma, t_{s_0}) < x_\tau(\sigma,t_{s_1})$.  This is because $x_\tau(\sigma, t_{s_1})$ is an $x$ of the sort sought for $x_\tau(\sigma, t_{s_0})$, and if it were older it would have been chosen at stage $t_{s_0}$.  

Let $D_\tau = \bigcup_s D_\tau(t_s)$.  There is no $x \in \M_i$ such that $\M_i \models P(x, f_\tau(\sigma\cat j))$ for every $\sigma\cat j \in D_\tau$ -- if there were, $x_\tau(\sigma, t_s)$ would converge to the least such.  Observe that $D_\tau \cap Q = \emptyset$, for if a string is in $Q$, then it is selected by a strategy which is an initial segment of $\tau$ or which extends $\tau\cat \infty_\infty$, and these strings are excluded from $D_\tau$.  

If there were an isomorphism $h: \M_i \cong \A$, then $h\circ f_\tau(\sigma\cat j) = (\emptyset, \sigma\cat j)$ for every $\sigma\cat j \not \in Q$, and in particular every $\sigma\cat j \in D_\tau$, and $P( (\emptyset, \sigma), (\emptyset, \sigma\cat j))$ for every $j$.  So $x = h^{-1}((\emptyset, \sigma))$ would satisfy $\M_i \models P(x, f_\tau(\sigma\cat j))$ for every $\sigma\cat j \in D_\tau$, contradicting the previous paragraph.
\end{proof}

\begin{claim}
If $\A \cong \M_i$, then there is a computable isomorphism between $\A$ and $\M_i$.
\end{claim}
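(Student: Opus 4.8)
The plan is as follows. Since $\A \cong \M_i$, the two preceding claims rule out the true outcome of $\tau$ (the $M_i$-strategy on the true path) being a finite $k$ or being $\infty_\infty$, so the true outcome is $\infty_k$ for some $k \in \omega$. First I would record the consequences of this. Because $\tau$ takes an infinite outcome infinitely often, every $\sigma \notin C_\tau$ eventually enters $B_\tau$ and remains there, and (using $\M_i \cong \A$) the searches defining $f_\tau$ all eventually succeed; so $f_\tau$ is a \emph{total} computable function on the co-finite, computable set $\omega^{<\omega} \setminus C_\tau$. Because $\tau$ takes the outcome $\infty_\infty$ only $k$ times, for each of the finitely many $\sigma \in C_\tau$ the values $x_\tau(\sigma, t_s)$ along the infinite-outcome stages are eventually constant; write $z_\sigma$ for the limit. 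Also fix an isomorphism $h \colon \M_i \to \A$, to be used only in the verification.

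Next I would build the isomorphism $g \colon \A \to \M_i$ fibrewise. For each $\sigma$, the substructures of $\A$ and of $\M_i$ on $W_\sigma$ are each a copy of $\bigl([\omega]^{<\omega}, (E_i)_{i \in \omega}\bigr)$, so by the claim characterizing its automorphisms, $h$ restricted to a fibre is the translation $F \mapsto F \triangle \mathbf{H}_\sigma$, where $\mathbf{H}_\sigma \in [\omega]^{<\omega}$ is defined by $h(z_\sigma) = (\mathbf{H}_\sigma, \sigma)$ and for $\sigma \notin C_\tau$ we set $z_\sigma := f_\tau(\sigma)$. Define $g((F,\sigma))$ to be the element of $\M_i$ reached from $z_\sigma$ by successively passing to $E_i$-neighbours for $i \in F$ (well defined by the cube structure of the fibre). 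This $g$ is computable: on the co-finitely many fibres it uses the total computable $f_\tau$, and on the finitely many fibres over $C_\tau$ it uses the finitely many constants $z_\sigma$. By construction $g$ is a bijection respecting every $W_\sigma$ and every $E_i$. It also respects every $S_n$: if $\sigma \in Q$ then every $S_n$ holds throughout the fibre in both structures; if $\sigma \notin Q$ then (as in the earlier claim, since the $S$- and $P$-checks pass at infinitely many infinite-outcome stages) $\M_i \models S_{n_\sigma}(f_\tau(\sigma))$, so $f_\tau(\sigma)$ is the unique fibre element of $\M_i$ satisfying $S_{n_\sigma}$, matching $(\emptyset, \sigma)$, while every other fibre element carries the same $S$-pattern on both sides.

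The hard part is showing $g$ respects $P$. Passing through $h$, one computes that $g$ respects all $P$-edges between the fibres over $\sigma$ and over $\sigma\cat j$ if and only if $|\mathbf{H}_{\sigma\cat j}|$ is odd $\iff j \in \mathbf{H}_\sigma$ --- exactly the parity condition at the end of the proof of \Cref{claim:muchnik}. When $\sigma, \sigma\cat j \notin C_\tau$ this is precisely what is enforced by the clause, checked at every infinite-outcome stage, that $\M_i \models P\bigl(f_\tau(\sigma), f_\tau(\sigma\cat j)\bigr)$ for $\sigma\cat j \in B_\tau$. When $\sigma \in C_\tau$ and $\sigma\cat j \notin C_\tau$, we have $\sigma\cat j \in D_\tau$, so the defining property of $z_\sigma = x_\tau(\sigma)$ --- that $\M_i \models W_\sigma(z_\sigma) \wedge P(z_\sigma, f_\tau(\sigma\cat j))$ for every $\sigma\cat j \in D_\tau$ --- yields the condition; since $\mathbf{H}_\sigma$ is necessarily finite, this also forces $\{\, j : \sigma\cat j \in D_\tau,\ |\mathbf{H}_{\sigma\cat j}| \text{ odd} \,\}$ to be finite, so $\mathbf{H}_\sigma$ is pinned down on all but finitely many children. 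The only pairs left are those with $\sigma, \sigma\cat j$ both in $C_\tau$, of which there are finitely many; here I would re-choose the finitely many constants $z_\sigma$ ($\sigma \in C_\tau$) by recursion on the finite tree $C_\tau = \phi(T_0)$ from its leaves upward, setting $\mathbf{H}_\sigma$ on its finitely many $C_\tau$-children so as to meet the parity condition (the remaining coordinates of $\mathbf{H}_\sigma$ being already forced by the previous case), the re-chosen element being locatable from the old $z_\sigma$ by following finitely many $E_i$-edges. Finally one checks that this patched $g$ is an isomorphism. I expect this last step --- confirming that $f_\tau$ and the $x_\tau(\sigma)$ together determine a coherent coordinatization of $\M_i$ up to the finitely many fibres over $C_\tau$, and that those can be fixed with finitely much additional (hence computable) data --- to be the main obstacle.
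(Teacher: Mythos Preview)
Your approach is essentially the paper's: fix the $M_i$-strategy $\tau$ on the true path, use that the true outcome is $\infty_k$, take $f_\tau$ as basepoints on the cofinite set $\omega^{<\omega}\setminus C_\tau$, use the limits $x_\tau(\sigma,\cdot)$ on $C_\tau$ with a finite correction, define $g$ fibrewise via $E_i$-walks, and verify by composing with a fixed isomorphism $h$.

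There is one genuine oversight in your case analysis. You assert that when $\sigma\in C_\tau$ and $\sigma\cat j\notin C_\tau$, then $\sigma\cat j\in D_\tau$; this is false. Recall $D_\tau$ also excludes strings chosen by an $N_\pi$-strategy extending $\tau\cat\infty_\infty$, and since $\tau$ takes outcome $\infty_\infty$ exactly $k$ times, there may be (finitely many) such $\sigma\cat j$. For these, $x_\tau(\sigma,\cdot)$ gives no information about $P(z_\sigma,f_\tau(\sigma\cat j))$, so your statement ``the only pairs left are those with $\sigma,\sigma\cat j$ both in $C_\tau$'' is not right, and your re-choosing step must also absorb these extra children. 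The paper handles this uniformly: for $\sigma\in C_\tau$ (processed from the leaves of the finite tree $C_\tau$ inward), it sets $J=\{j:\M_i\models\neg P(x,f_\tau(\sigma\cat j))\}$ over \emph{all} $j$, argues $J$ is finite precisely because the complement of $D_\tau$ among children of $\sigma$ is finite, and then replaces $x$ by the element corresponding to $(F\triangle J,\sigma)$. Once you enlarge your patching step to cover all non-$D_\tau$ children rather than just the $C_\tau$-children, your argument coincides with the paper's. (Also, $C_\tau$ is not $\phi(T_0)$; it is just a finite subtree of $Q$, namely the image under $\phi$ of those $\pi$ whose $N_\pi$-strategy lies below $\tau$.)
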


\begin{proof}
Fix $\tau$ the $M_i$-strategy along the true path.  Then $\tau$'s true outcome is $\infty_k$ for some $k \in \omega$, and so the domain of $f_\tau$ is all strings $\sigma \in \omega^{<\omega}$ except those chosen by some strategy $\rho \subset \tau$, which is to say $\dom f_\tau = \omega^{<\omega} - C_\tau$.  Note that the domain of $f_\tau$ is thus a cofinite, upwards-closed set.  By assumption, there is an isomorphism $h: \M_i \to \A$.  We begin by non-uniformly extending $f_\tau$ to all of $\omega^{<\omega}$.  We do so recursively along $C_\tau$.

For $\sigma \in C_\tau$, assume $f_\tau(\sigma\cat j)$ is already defined for every $j \in \omega$.  Let $t_0 < t_1 < \dots$ be the stages at which $\tau$ has an infinite outcome.  As $\tau$'s true outcome is $\infty_k$, $x = \lim_s x_\tau(\sigma, t_s)$ exists.  So for every $\sigma\cat j \in D_\tau = \bigcup_s D_\tau(t_s)$, $\M_i \models P(x, f_\tau(\sigma\cat j))$.  Since $C_\tau$ is finite and $\tau\cat \infty_\infty$ is visited only finitely many times, there are only finitely many $j$ with $\sigma\cat j \not \in D_\tau$.  Let $J = \{ j : \M_i \models \neg P(x, f_\tau(\sigma\cat j))\}$, and note that $J$ is finite.  Informally, we let $f_\tau(\sigma) = x\triangle J$.  More precisely, if $h(x) = (F, \sigma)$, we let $f_\tau(\sigma) = h^{-1}( (F \triangle J, \sigma))$.

Having extended $f_\tau$ to a total computable function (albeit non-uniformly), let $H_\sigma$ be such that $h(f_\tau(\sigma)) = (H_\sigma, \sigma)$, for each $\sigma \in \omega^{<\omega}$.

\begin{subclaim}
For all $\sigma$ and $j$, $\M_i \models P( f_\tau(\sigma), f_\tau(\sigma\cat j))$.
\end{subclaim}

\begin{proof}
If $\sigma \not \in C_\tau$, then $\sigma\cat j \not \in C_\tau$, and so $f_\tau(\sigma), f_\tau(\sigma\cat j)$ were defined by our action for $\tau$, and since $\tau$'s true outcome is an infinite outcome, $\M_i \models P( f_\tau(\sigma), f_\tau(\sigma\cat j))$.

If instead $\sigma \in C_\tau$, then this is by construction.  Consider the $x = \lim_s x_\tau(\sigma, t_s)$ and the set $J$ used in the definition of $f_\tau(\sigma)$.  If $j \not \in J$, then $\M_i \models P(x, f_\tau(\sigma\cat j))$.  So $\A \models P(h(x), (H_{\sigma\cat j}, \sigma\cat j))$.  By construction, $h(x) = (H_\sigma\triangle J, \sigma)$, and since $j \not \in J$, $j \in H_\sigma$ iff $j \in H_\sigma\triangle J$, so $\A \models P( (H_\sigma, \sigma), (H_{\sigma\cat j}, \sigma\cat j))$, and thus $\M_i \models P( f_\tau(\sigma), f_\tau(\sigma\cat j))$.

If instead $j \in J$, then $\M_i \models \neg P(x, f_\tau(\sigma\cat j))$.  So $\A \models \neg P(h(x), (H_{\sigma\cat j}, \sigma\cat j))$.  Since $j \in J$, $\A \models P( (H_\sigma, \sigma), (H_{\sigma\cat j}, \sigma\cat j))$, and thus $\M_i \models P( f_\tau(\sigma), f_\tau(\sigma\cat j))$.
\end{proof}

It follows that for every $\sigma$ and $j$, $j \in H_\sigma$ iff $|H_{\sigma\cat j}|$ is odd.

We now define a computable isomorphism $g: \A \to \M_i$.  We begin by defining $g( (\emptyset, \sigma) ) = f_\tau(\sigma)$ for every $\sigma \in \omega^{<\omega}$.  We then extend recursively.  For each $F \in [\omega]^{<\omega}$ with $|F| > 0$, we search for $G$, $j$ and $y$ with $F = G \cup \{j\}$, $g( (G, \sigma))$ already defined, and $\M_i \models E_j( g( (G, \sigma)), y)$, and define $g( (F, \sigma) ) = y$ for the first such found.

To show that $g$ is an isomorphism, we will show that $h\circ g$ is an automorphism of $\A$.

\begin{subclaim}
For each $(F, \sigma) \in \A$, $h\circ g( (F, \sigma)) = (F\triangle H_\sigma, \sigma)$.
\end{subclaim}

\begin{proof}
We argue by induction on $|F|$.  The base case $|F| = 0$ is immediate.

For $|F| > 0$, there was some $j \in F$ such that $g( (F, \sigma))$ was defined based on $G = F - \{j\}$.  Then $\M_i \models E_j(g( (G, \sigma)), g((F, \sigma)))$, and so
\begin{align*}
\A \models E_j( h\circ g( (G, \sigma)), h\circ g( (F, \sigma))) &\implies
h\circ g( (G, \sigma)) \triangle h\circ g( (F, \sigma)) = \{j\}\\
&\implies (G\triangle H_\sigma)\triangle h\circ g( (F, \sigma)) = \{j\}\\
&\implies (F\triangle \{j\})\triangle H_\sigma \triangle h\circ g( (F, \sigma)) = \{j\}\\
&\implies h\circ g( (F, \sigma)) = F\triangle H_\sigma.\qedhere
\end{align*}
\end{proof}

\begin{subclaim}
$h\circ g$ is an automorphism of $\A$.
\end{subclaim}

\begin{proof}
It follows from the previous claim that $h\circ g$ respects all the relations $E_j$.

As $j \in H_\sigma$ iff $|H_{\sigma\cat j}|$ is odd, $h\circ g$ respects the $P$ relation.

If $\sigma \in Q$, then $S_n( (F, \sigma))$ for every $F \in [\omega]^{<\omega}$ and $n \in \omega$, and so $h\circ g$ trivially respects the $S_n$ relations on elements of the form $(F, \sigma)$.

If $\sigma \not \in Q$, then by construction, $\M_i \models W_\sigma(f_\tau(\sigma)) \wedge S_{n_\sigma}(f_\tau(\sigma))$.  Further, $(\emptyset, \sigma)$ is the only element of $\A$ satisfying this formula, so it must be that $h\circ f_\tau(\sigma) = (\emptyset, \sigma)$.  Thus $H_\sigma = \emptyset$, and so $h\circ g$ fixes elements of the form $(F, \sigma)$, and thus respects the $S_n$ relations on them.
\end{proof}

So $h\circ g$ is an automorphism of $\A$, and thus $g$ witnesses that $\A$ and $\M_i$ are computably isomorphic.
\end{proof}
This completes the proof of \Cref{thm:cc_high_Scott}.

\smallskip

The computably categorical structure $\A$ constructed in the proof of \Cref{thm:cc_high_Scott} has the property that it is no longer computably categorical after a constant is added naming $(\emptyset, \seq{})$.  For fix any $i$ such that $[i] \cap [Q] \neq \emptyset$.  Then, as shown in \cref{claim:muchnik}, $(\A, (\emptyset, \seq{}) ) \cong (\A, (\{i\}, \seq{}))$, but computing an isomorphism requires computing a path through $Q$, and $Q$ has no hyperarithmetic paths.

Indeed, for any distinct $i_0, i_1$, there is no computable isomorphism between $(\A, (\{i_0\}, \seq{}))$ and $(\A, (\{i_1\}, \seq{}))$.  Thus $(\A, (\emptyset, \seq{}))$ has infinite computable dimension.  A structure with this behavior was first constructed by Hirschfeldt, Khoussainov and Shore~\cite{HiKhSh03}.

\begin{corollary}
There is a structure $\B_0$ of computable dimension 2 with two computable copies which are not hyperarithmetically isomorphic.
\end{corollary}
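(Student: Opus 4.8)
The plan is to rerun the construction of \Cref{thm:cc_high_Scott} with a different computable tree in place of the Harrison tree, engineered so that $(\emptyset, \seq{})$ has an orbit of size exactly two. Let $T_0$ be the tree of descending sequences through the Harrison ordering, and set $T' = \{\seq{}\} \cup \{\seq{0}\cat\pi : \pi \in T_0\}$. Then $T'$ is computable, its set of extendible nodes is not hyperarithmetic (it differs from that of $T_0$ by a trivial arithmetic transformation), and $[T']$ is nonempty with no hyperarithmetic member (every path through $T'$ has the form $\seq{0}\cat f$ for some $f \in [T_0]$). The key extra feature is that $\seq{0}$ is the only node of length one in $T'$. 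Running the construction of \Cref{thm:cc_high_Scott} with $T := T'$ yields a computably categorical structure $\A'$ and an arithmetic isomorphism $\phi'$ from $T'$ onto an arithmetic tree $Q'$, with $\phi'(\seq{}) = \seq{}$ and $\phi'(\seq{0}) = \seq{m}$ for some $m$; since $\phi'$ preserves length, $\seq{m}$ is the unique node of length one in $Q'$, it is extendible in $Q'$, and $[Q']$ is nonempty with no hyperarithmetic member (these transfer across $\phi'$, whose inverse is arithmetic). I would then take $\B_0 = (\A', (\emptyset, \seq{}))$.

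Next I would identify the orbit of $(\emptyset, \seq{})$ in $\A'$. Since automorphisms preserve each $W_\sigma$, they fix second coordinates, so the orbit is contained in $\{(F, \seq{}) : F \in [\omega]^{<\omega}\}$; and by \cref{claim:muchnik} an automorphism moving $(\emptyset, \seq{})$ to $(F, \seq{})$ exists iff $[\seq{i}] \cap [Q'] \neq \emptyset$ for each $i \in F$, which, since $\seq{m}$ is the only length-one node of $Q'$, forces $F \subseteq \{m\}$. Hence the orbit of $(\emptyset, \seq{})$ is exactly $\{(\emptyset, \seq{}), (\{m\}, \seq{})\}$, so $(\A', (\emptyset, \seq{}))$ and $(\A', (\{m\}, \seq{}))$ are isomorphic, via the automorphism provided by \cref{claim:muchnik} using that $[\seq{m}] \cap [Q'] = [Q'] \neq \emptyset$. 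They are not computably isomorphic: such an isomorphism would be a computable automorphism of $\A'$ sending $(\emptyset, \seq{})$ to $(\{m\}, \seq{})$, and by the Muchnik equivalence in \cref{claim:muchnik} it would compute a member of $[Q']$, contradicting the absence of a computable path through $Q'$. So the computable dimension of $\B_0$ is at least two, and the same argument with ``hyperarithmetic'' in place of ``computable'' shows these two copies are not hyperarithmetically isomorphic.

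For the upper bound on the dimension, let $\M$ be an arbitrary computable copy of $\B_0$. Forgetting the constant, $\M$ is a computable copy of $\A'$, which is computably categorical, so there is a computable isomorphism $h$ from $\M$ (without the constant) onto $\A'$. Then $h$ sends the interpretation of the constant in $\M$ to an element of $\A'$ in the orbit of $(\emptyset, \seq{})$, hence to $(\emptyset, \seq{})$ or $(\{m\}, \seq{})$; so $\M$ is computably isomorphic, as a structure with a constant, to $(\A', (\emptyset, \seq{}))$ or $(\A', (\{m\}, \seq{}))$. Thus $\B_0$ has computable dimension exactly two, with two computable copies that are not hyperarithmetically isomorphic.

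The only real work is bookkeeping: one must confirm that the construction and verification of \Cref{thm:cc_high_Scott} go through with $T'$ in place of the Harrison tree with no change. This should be immediate, since the construction uses only that $T$ is computable and that its set of extendible nodes is not hyperarithmetic — and the computable categoricity part does not use the latter at all — so the unique-length-one-node feature of $T'$ causes no difficulty. The main conceptual point, rather than an obstacle, is the balance struck by $T'$: it must be ill-founded (so that $(\A', (\emptyset, \seq{}))$ and $(\A', (\{m\}, \seq{}))$ are genuinely copies of the same structure), yet have no hyperarithmetic path (so that the two copies are as far from isomorphic as possible), and it must have a single extendible direction at the root (so that there are no further computable copies).
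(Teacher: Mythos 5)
Your proposal is correct, but it takes a genuinely different route from the paper. The paper does not touch the tree $T$ at all: it keeps the structure $\A$ from \Cref{thm:cc_high_Scott} verbatim and adjoins two new elements $a_{\text{even}}$ and $a_{\text{odd}}$, with $P(a_{\text{even}},(F,\seq{}))$ holding iff $|F|$ is even and $P(a_{\text{odd}},(F,\seq{}))$ iff $|F|$ is odd, and then names $a_{\text{even}}$ with the constant. This collapses the infinitely many ``odd'' targets of $(\emptyset,\seq{})$ into the single element $a_{\text{odd}}$, so the named constant has a two-element orbit $\{a_{\text{even}},a_{\text{odd}}\}$ even though $(\emptyset,\seq{})$ itself has an infinite orbit; the dimension-at-most-2 argument is then as in yours (strip the two auxiliary elements, apply computable categoricity of $\A$, extend), and any isomorphism between the two copies restricts to an automorphism of $\A$ moving $(\emptyset,\seq{})$ to some odd-cardinality $(F,\seq{})$, hence is non-hyperarithmetic by \cref{claim:muchnik}. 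You instead re-engineer the input tree to $T'$ so that the root of $Q'$ has a single extendible direction, making the orbit of $(\emptyset,\seq{})$ itself have size exactly two, and then name that element directly. Both arguments are sound: yours is conceptually cleaner (a constant naming a member of a genuine two-element orbit, no auxiliary elements or extra relations) but requires rerunning the construction on a modified tree and checking its hypotheses (which, as you note, is immediate since only computability of $T$ and non-hyperarithmeticity of its extendible nodes are used, and $[T']$ is nonempty with no hyperarithmetic path); the paper's buys dimension 2 as a two-line modification of the already-built $\A$, preserving the full Harrison tree inside the structure. Your upper-bound and lower-bound arguments are both complete as written.
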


\begin{proof}
Let $\A$ be the structure from the proof of \Cref{thm:cc_high_Scott}.  Our structure $\B_0$ will consist of $\A$ along with two new elements, $a_{\text{even}}$ and $a_{\text{odd}}$, and a new constant symbol $c$.  We make the following additional definitions:
\begin{itemize}
\item $\B_0 \models P( a_\text{even}, (F, \seq{}))$ precisely when $|F|$ is even;
\item $\B_0 \models P( a_\text{odd}, (F, \seq{}))$ precisely when $|F|$ is odd;
\item $\B_0 \models c = a_\text{even}$;
\end{itemize}
and no other relations hold involving $a_\text{even}$ or $a_\text{odd}$.

Let $\B_1$ be defined identically to $\B_0$, excepting that $\B_1 \models c = a_\text{odd}$.  Let $\hat{\B}$ be the structure obtained from $\B_0$ (equivalently, from $\B_1$) by removing the constant symbol $c$ from the language.

For any computable $\hat{\C} \cong \hat{\B}$, if we nonuniformly remove the copies of $a_{\text{even}}$ and $a_{\text{odd}}$, then we have a computable copy of $\A$.  As $\A$ is computably categorical, there is a computable isomorphism $f$ from this smaller structure to $\A$, and this can be extended to a computable isomorphism from $\hat{\C}$ to $\hat{\B}$.

So for any $\C \cong \B_0$, the computable isomorphism from $\hat{\C}$ to $\hat{B}$ is either an isomorphism from $\C$ to $\B_0$ or from $\C$ to $\B_1$.  However, any isomorphism from $\B_0$ to $\B_1$ must restrict to an automorphism of $\A$ sending $(\emptyset, \seq{})$ to some $(F, \seq{})$ with $|F|$ odd, and by \Cref{claim:muchnik} and choice of computable tree $T$, such an isomorphism must not be hyperarithmetic.
\end{proof}

We are grateful to Rod Downey for asking whether our methods could address the previous question.

\section{A structure with degree of categoricity and infinite spectral dimension}

This section is devoted to the proof of the following theorem.

\begin{theorem}\label{thm:weak_deg_cat}
There is a computable structure with degree of categoricity $\zerojj$, but with infinite spectral dimension.
\end{theorem}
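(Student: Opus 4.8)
The plan is to build $\A$ by adapting the coding machinery from \Cref{thm:cc_high_Scott}, but replacing the Harrison tree $T$ (whose job there was to make orbits non-hyperarithmetic) with a combinatorial object whose associated path-problem has degree of isomorphism exactly $\zerojj$, while simultaneously making the relevant ``path'' reappear inside \emph{every} isomorphism between any two presentations. Concretely, I would keep the affine-cube gadget $([\omega]^{<\omega},(E_i)_i)$ attached to strings $\sigma\in\omega^{<\omega}$, together with the relations $P$, $W_\sigma$, and the c.e.\ labels $S_n$ built from the auxiliary set $C$, $f$, $V_n$ exactly as before; the key point preserved from \Cref{claim:muchnik} is that an automorphism sending $(\emptyset,\sigma)$ to $(F,\sigma)$ is Muchnik-equivalent to $\bigoplus_{i\in F}([\sigma\cat i]\cap[Q])$, so the automorphism group of $\A$ literally encodes the path structure of the internally-built tree $Q$. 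To get \emph{degree of categoricity} $\zerojj$ (rather than the structure being merely rigid-with-bad-orbits), I would instead build $Q$ so that $\zerojj$ can always compute a path through it in any presentation, e.g.\ by arranging $Q$ to have a $\Pi^0_2$-singleton branch in $\A$'s presentation whose analogue is $\Pi^0_2$ in any copy—so $\zerojj$ suffices to recover it and hence (via the argument in \Cref{thm:cc_high_Scott}'s verification) to build an isomorphism. This handles the ``$\zerojj$ is \emph{a} degree of categoricity'' half: one runs the same $M_i$-strategies, now relativized, to produce a $\zerojj$-computable isomorphism whenever $\M_i\cong\A$.

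The harder half, and the real content, is ensuring \emph{infinite spectral dimension}: for every finite list of presentations $\A_0,\dots,\A_{k-1}$ of $\A$, the degree of isomorphism of that list is strictly below $\zerojj$ (equivalently, some $\zerojj$-oracle degree fails to be the \emph{least} upper bound of the isomorphism problems restricted to $k$ copies). The plan here is a diagonalization against all $k$-tuples of presentations simultaneously, for all $k$, layered into the priority tree: I would add requirements $R_{k,e}$ asserting that the $e$-th candidate $k$-tuple does not have degree of isomorphism $\zerojj$, and meet them by controlling, within a fresh ``cone'' of strings $\sigma$ devoted to $R_{k,e}$, how many independent path-problems (copies of $[\sigma\cat i]\cap[Q]$, indexed by $i$) are genuinely hard. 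The mechanism: in that cone we make only finitely many of the branches $[\sigma\cat i]\cap[Q]$ nonempty-but-hyperarithmetically-complicated, and we use the $E_i$-cube together with the parity bookkeeping in $P$ so that recovering the isomorphism on the cone from $k$ copies only forces computing a \emph{bounded} (in terms of $k$) join of path-problems; by making the cone require a strictly larger join than any $k$ copies can pin down—while $\zerojj$, being above all of them, still computes everything—we get that no finite collection achieves the full degree. The book-keeping that makes ``$k$ copies only pin down a $k$-fold join'' rigorous is essentially the observation already used in the proof of \Cref{thm:cc_high_Scott} (the subclaims about $f_\tau$, the sets $D_\tau$, $J$, and ``$h\circ g$ is an automorphism''): an isomorphism between two copies is determined up to an automorphism of $\A$, and an automorphism of $\A$ restricted to a cone is exactly a choice of paths through the nonempty branches there; with $k$ copies one gets $k$ such automorphisms worth of information, i.e.\ a $k$-fold join of path-data, which by design is strictly weaker than the $(k{+}1)$-fold join coded by the next cone.

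The main obstacle I anticipate is the interaction between the two halves on the priority tree: the $M_i$-strategies want $Q$'s branches to be ``thin enough'' that a (relativized) isomorphism can be assembled, while the $R_{k,e}$-requirements want to inject genuinely hard path-problems into fresh cones, and both are happening inside a $\Pi^0_2$-over-$\zeroj$ (hence $\zerojj$-guided) construction rather than the plain $\Pi^0_2$ construction of \Cref{thm:cc_high_Scott}. I would resolve this by allocating disjoint cones of $\omega^{<\omega}$ to the $R_{k,e}$ (so they never compete for the same strings), and by having the $M_i$-strategies work \emph{uniformly} across cones using the same infinite-outcome/$x_\tau(\sigma,s)$ apparatus, now with one more layer of outcomes tracking the $\zeroj$-guessing needed to build $Q$'s branches correctly. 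A secondary obstacle is verifying the ``least degree'' claim precisely: one must check not only that each finite tuple has isomorphism-degree $<\zerojj$, but that $\zerojj$ really \emph{is} a degree of categoricity (lower bound), which comes from the \emph{infinitely} many cones collectively coding a $\zerojj$-complete amount of path-data, so that any uniform isomorphism-computer for \emph{all} copies must reach $\zerojj$—this is where letting $k\to\infty$ forces the supremum up to $\zerojj$ even though each finite stage stays below. I would present the cone allocation and the uniform $M_i$-module first, then the $R_{k,e}$ diagonalization, then a verification mirroring the claim-by-claim structure of \Cref{thm:cc_high_Scott}'s verification.
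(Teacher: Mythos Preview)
Your proposal misses the paper's central mechanism and has a genuine gap in the lower-bound argument.

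The paper does \emph{not} build a structure that is $\zerojj$-categorical directly via relativized $M_i$-strategies. Instead, it builds a \emph{computably categorical} base structure $\A$ (using exactly the unrelativized machinery of \Cref{thm:cc_high_Scott}) together with two families of $\Delta^0_3$ paths $\{f_i\}_{i\in X}$ and $\{g_j\}_{j\in Y}$ coded into two disjoint copies of the tree gadget, and then adds two constants $c,d$. Because the base is computably categorical, every computable presentation of $(\A,c,d)$ is computably isomorphic to some $(\A,u_\ell,v_F)$, and an isomorphism from the standard copy to that one is Muchnik-equivalent to $f_i\oplus\bigoplus_{j\in F}g_j$ for any $i\in X$. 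The whole argument then reduces to pure degree-theoretic facts about the $f_i$ and $g_j$: (a) each is $\Delta^0_3$; (b) $f_i+g_j\ge_T\emptyset''$ whenever $i<j$, achieved by making the $N$-strategy outcomes encode a modulus for a fixed $\Pi^0_2$-complete set; and (c) $f_i\oplus g_{j_0}\oplus\cdots\oplus g_{j_{k-1}}\not\ge_T\emptyset'$ whenever $i>\max_m j_m$, achieved by $U_{i,e}$-strategies using the recursion theorem to pick a diagonalization witness $x_\tau$ for $\emptyset'$. There is no diagonalization against tuples of presentations and no $R_{k,e}$ requirements; the infinite spectral dimension falls out of (c) because for any finite list of copies one can choose $i$ larger than all the $j$'s appearing in the corresponding $F$'s.

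Your plan's serious gap is the lower bound. Saying ``letting $k\to\infty$ forces the supremum up to $\zerojj$'' is not an argument: an infinite family of paths, each individually below $\zerojj$, can easily have every finite join---and even the full join---strictly below $\zerojj$. You need a concrete mechanism forcing any degree $\degd$ that computes \emph{all} isomorphisms to compute $\emptyset''$. The paper's mechanism is precisely (b): from one specific copy $\degd$ must compute some $f_i$; from a second specific copy (chosen with $j>i$) it must compute $g_j$; and the modulus coding built into the $N^a_{r,n}$-strategies guarantees $f_i+g_j\ge_T\emptyset''$. Your proposal has no analogue of this pairwise coding and no analogue of the $U_{i,e}$ recursion-theorem diagonalization for (c), so both directions of ``degree of categoricity equals $\zerojj$ but spectral dimension infinite'' are unsupported.
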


We will first build a computable structure $\A$ and closed sets $Q, R \subseteq \omega^\omega$ satisfying all of the following:
\begin{itemize}
\item $\A$ is computably categorical.
\item $X = \{ i : [i] \cap Q \neq \emptyset\}$ is infinite.
\item For all $i \in X$, $|[i] \cap Q| = 1$.  We refer to the unique element of $[i] \cap Q$ as $f_i$.
\item $Y = \{ j : [j] \cap R \neq \emptyset\}$ is infinite.
\item For all $j \in Y$, $|[j] \cap R| = 1$.  We refer to the unique element of $[j] \cap R$ as $g_j$.
\item Each $f_i$ and $g_j$ is $\Delta^0_3$.
\item For any $i \in X$ and any $j \in Y$ with $i < j$, $f_i + g_j$ computes $\emptyset''$.
\item For any $j_0, \dots, j_k \in Y$ and any $i \in X$ with $i > \max\{j_0, \dots, j_k\}$, $f_i \oplus g_{j_0} \oplus \dots \oplus g_{j_k}$ does not compute $\emptyset'$.
\item There are elements $\{u_0, u_1\}$ which comprise an orbit of $\A$.  Computing an automorphism sending $u_0$ to $u_1$ exactly corresponds to computing an element of $Q$ (i.e.\ the tasks are Muchnik-equivalent).
\item There are elements $\{v_F : F \in [Y]^{<\omega}\}$ which comprise an orbit of $\A$.  Computing an automorphism sending $v_F$ to $v_G$ (and fixing $u_0$) exactly corresponds to computing $\displaystyle \bigoplus_{j \in F\triangle G} g_j$.
\end{itemize}

Before we proceed with the construction, we explain why this will suffice.  Our final structure will be $(\A, c, d)$, where $c$ and $d$ are constants naming $u_0$ and $v_\emptyset$.  Suppose $\B$ is another computable presentation of $(\A, c, d)$.  As $\A$ is computably categorical, we may assume that $\B$ has the form $(\A, u_\ell, v_F)$ for $\ell \in \{0, 1\}$, $F \in [Y]^{<\omega}$.  Then, for any $i \in X$, $\displaystyle f_i \oplus \bigoplus_{j \in F} g_j$ computes an isomorphism from $(\A, c, d)$ to $\B$, and in particular $\zerojj$ does.

If $\B_0, \dots, \B_{n-1}$ are all computable presentations of $(\A, c, d)$, then we may assume that $\B_k$ is of the form $(\A, u_{\ell_k}, v_{F_k})$.  Then, for any $i \in X$, $\displaystyle f_i \oplus \bigoplus_{j \in \bigcup_{k < n} F_k} g_j$ computes isomorphisms between all of these structures, and if $i$ is chosen sufficiently largely, it does not compute $\emptyset'$.

On the other hand, suppose $\degd$ can compute an isomorphism between any two computable presentations of $(\A, c, d)$.  Then it computes an isomorphism between $(\A, c, d)$ and $(\A, u_1, d)$, and thus must compute some $f_i \in Q$.  Fix $j > i$.  Then $\degd$ must also compute an isomorphism between $(\A, c, d)$ and $(\A, c, \{j\})$, and thus must compute $g_j$.  Since $f_i + g_j$ computes $\emptyset''$, $\degd \ge \zerojj$.

\subsection*{The structure}
Our structure will have $(W_\sigma^Q)_{\sigma \in \omega^{<\omega}}, (W_\sigma^R)_{\sigma \in \omega^{<\omega}}$ all unary relations, $(E_i)_{i \in \omega}$ all binary relations, $P$ a binary relation, and $(S_n)_{n \in \omega}$ all unary relations.  As in the previous construction, the $S_n$ will only be declared in a c.e.\ fashion.  Our structure's universe will be $\left([\omega]^{<\omega} \times \omega^{<\omega}\times\{0,1\}\right) \sqcup\{u_0, u_1\}$.

With the exception of the $S_n$, we can give a complete description of our structure immediately.  For $(F, \tau, a), (G, \rho, b) \in [\omega]^{<\omega} \times \omega^{<\omega}\times \{0,1\}$:
\begin{itemize}
\item $W_\sigma^Q( (F, \tau, a) )$ holds iff $\sigma = \tau$ and $a = 0$.
\item $W_\sigma^R( (F, \tau, a) )$ holds iff $\sigma = \tau$ and $a = 1$.
\item $E_i( (F,\tau,a), (G,\rho, b) )$ holds iff $\tau = \rho$, $a = b$ and $F\triangle G = \{i\}$.
\item $P( (F,\tau, a), (G,\rho, b) )$ holds iff $a = b$, $\rho = \tau\cat i$ for some $i$, and one of the following holds:
\begin{itemize}
\item $i \not \in F$ and $|G|$ is even; or
\item $i \in F$ and $|G|$ is odd.
\end{itemize}
\item For $k < 2$, $P( u_k, (F,\seq{}, 0))$ holds iff $k + |F|$ is even.
\item No other relations involving $u_0$ or $u_1$ hold.
\end{itemize}
As before, we have the affine space $[\omega]^{<\omega}$, but this time we have two copies of $\omega^{<\omega}$, and we have associated a copy of the space to each string in either copy.  The $W_\sigma^Q$ and $W_\sigma^R$ let us identify which copy a given element belongs to.

As mentioned, during the course of the construction we will build closed sets $Q, R \subseteq \omega^{\omega}$.  Let $T_0, T_1 \subseteq \omega^{<\omega}$ be the minimal trees such that $[T_0] = Q$, $[T_1] = R$, i.e.\ $T_0 = \{ \sigma \in \omega^{<\omega} : [\sigma] \cap Q \neq \emptyset\}$, and similarly for $T_1$.  For $a < 2$ and $\sigma \in T_a$, $S_n(F, \sigma, a)$ will hold for every $n$ and every $F \in [\omega]^{<\omega}$.  For $\sigma \not \in T_a$, there will be an $n$ such that $S_n(F, \sigma, a)$ holds iff $F = \emptyset$.  The purpose of this setup is the following.

\begin{claim}\label{claim:muchnik_again}
The automorphisms of $\A$ which send $(\emptyset, \sigma, 1)$ to $(\{i\}, \sigma, 1)$ are Muchnik equivalent to $[\sigma\cat i]\cap R$.  More generally, the automorphisms of $\A$ which send $(\emptyset,\sigma, 1)$ to $(F, \sigma, 1)$ are Muchnik equivalent to the join $\bigoplus_{i \in F} ([\sigma\cat i] \cap R)$.

The automorphisms which send $u_0$ to $u_1$ are Muchnik equivalent to $Q$.

In particular, such an automorphism exists iff the appropriate set is nonempty.
\end{claim}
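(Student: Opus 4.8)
The plan is to adapt the proof of \Cref{claim:muchnik} essentially verbatim, carried out inside one of the two copies of $\omega^{<\omega}$, and to add a one-line parity computation for $u_0$ and $u_1$.

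First I would record the structure of an arbitrary automorphism $g$ of $\A$. Since $\{x : W_\tau^Q(x)\}$ is exactly the ``sheet'' $\{(F,\tau,0) : F \in [\omega]^{<\omega}\}$ and $\{x : W_\tau^R(x)\}$ is exactly $\{(F,\tau,1) : F\}$, the automorphism $g$ must map every sheet to itself, hence also map $\{u_0,u_1\}$ to itself. Since each $E_i$ restricted to a sheet is the edge relation of the infinite cube, the cube-automorphism claim gives, for each $\tau$ and $a$, a set $H_{\tau,a} \in [\omega]^{<\omega}$ with $g((F,\tau,a)) = (F\triangle H_{\tau,a},\tau,a)$ for all $F$. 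Exactly as in \Cref{claim:muchnik}, the $P$-edges between the $\tau$-sheet and the $(\tau\cat i)$-sheet at a fixed $a$ force $|H_{\tau\cat i,a}|$ odd iff $i \in H_{\tau,a}$, and the stipulated behaviour of the $S_n$ forces $\tau \in T_a$ whenever $H_{\tau,a}\neq\emptyset$ (else there is an $n$ with $\A \models S_n(\emptyset,\tau,a) \wedge \neg S_n(F,\tau,a)$ for every $F \neq \emptyset$, contradicting that $g$ respects $S_n$).

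With these facts, the first assertion is proved just as in \Cref{claim:muchnik}, now inside the $a = 1$ copy: from $g$ with $H_{\sigma,1} = F$, for each $i \in F$ one extracts (uniformly in $g$, using $P$ to step up and the $S_n$ facts to land in $T_1$) a path through $T_1$ beginning with $\sigma\cat i$, hence an element of $[\sigma\cat i]\cap R$, so $g$ computes $\bigoplus_{i\in F}([\sigma\cat i]\cap R)$; conversely, from such paths the converse construction of \Cref{claim:muchnik}, carried out inside the $a = 1$ copy with $g$ fixing every element with $a = 0$ and fixing $u_0, u_1$, yields the desired automorphism. For the second assertion, if $g(u_0) = u_1$ then $g(u_1) = u_0$, and since $g$ respects $P(u_k,(F,\seq{},0))$ while shifting the $a = 0$ root sheet by $H_{\seq{},0}$, we get that $k + |F|$ even $\iff k + |F\triangle H_{\seq{},0}|$ odd, i.e.\ $|H_{\seq{},0}|$ is odd, hence nonempty; so, as in \Cref{claim:muchnik}, one extracts from $g$ a path through $T_0$, i.e.\ an element of $Q$. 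Conversely, given $h \in Q$, put $\sigma = \seq{}$ and $F = \{h(0)\}$ in the converse construction of \Cref{claim:muchnik} inside the $a = 0$ copy (fixing everything with $a = 1$) to get an automorphism of $\A$ on the elements other than $u_0, u_1$ with $H_{\seq{},0} = \{h(0)\}$, and extend it by $g(u_0) = u_1$, $g(u_1) = u_0$; since $|H_{\seq{},0}| = 1$ is odd and no other relations involve $u_0, u_1$, this $g$ is an automorphism of $\A$. The ``in particular'' is then immediate, since in either clause such an automorphism exists exactly when the corresponding mass problem is nonempty.

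I do not expect a real obstacle here: all of the work is already done in \Cref{claim:muchnik}, and the only new ingredient is the parity bookkeeping for $u_0$ and $u_1$. The one place needing a little care is the two backward directions, where one must verify that the automorphism assembled from the path data is total and respects \emph{every} relation of $\A$ — in particular the $P$-edges joining $u_0$ and $u_1$ to the $a = 0$ root sheet, and the $S_n$ on strings lying outside $T_0$ and $T_1$ — but this is the same verification as in \Cref{claim:muchnik} together with the one-line parity check above.
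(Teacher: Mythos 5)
Your proposal is correct and is exactly what the paper intends: its entire proof of this claim is the single line ``As the proof of \cref{claim:muchnik},'' and your write-up is that proof carried out in the two-sheeted setting, with the only genuinely new ingredient being the parity computation showing $|H_{\seq{},0}|$ must be odd when $u_0\mapsto u_1$ (and conversely that an odd twist of the $a=0$ root sheet lets you swap $u_0$ and $u_1$). No gaps.
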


\begin{proof}
As the proof of \cref{claim:muchnik}.
\end{proof}

Let $v_F = (F, \seq{}, 1)$ for $F \in [Y]^{<\omega}$.  These, along with $u_0, u_1$, are the previously promised elements.

\subsection*{Construction}
Fix $(\M_i)_{i \in \omega}$ a listing of all partial computable structures in our language.  Fix also $Z$ a set which is complete for $\Pi^0_2$, and with the property that any enumeration of $Z$ computes $Z$.  For example, $Z = \overline{\emptyset'}\oplus \overline{\emptyset''}$.  Fix a computable predicate $\phi(n,s)$ such that $n \in Z \iff \exists^\infty s\, \phi(n,s)$.  For $i < j$, our intention is that $f_i + g_j$ is a ``modulus for $\Sigma^0_2$-ness'':
\begin{center}
for $n > j$, $\exists^\infty s\, \phi(n, s) \iff \exists s > (f_i+g_j)(n)\, \phi(n,s)$.
\end{center}
This will ensure that $f_i + g_j$ enumerates $Z$ and thus computes $Z$.

Again, we perform a $\Pi^0_2$ priority construction on a tree of strategies.  We have strategies of type $N^a_r$, for $a < 2$ and $r \in \omega$, which are mother strategies which begin an $f_i$ or $g_i$ with $i > r$, depending on whether $a = 0$ or $1$.  We have strategies of type $N^a_{r,n}$ for $a < 2$, $r \in \omega$ and $n > 0$, which are daughter strategies responsible for defining $f_i(n)$ or $g_j(n)$ and ensuring that $\exists^\infty s\, \phi(n, s) \iff \exists s > (f_i+g_j)(n)\, \phi(n,s)$, where either $f_i$ or $g_j$ is the path begun by the mother node.  We have strategies of type $U_{i,e}$, for $i,e \in \omega$, which are responsible for ensuring that $f_i \oplus g_0 \oplus \dots \oplus g_{i-1}$ does not compute $\emptyset'$ via functional $\Phi_e$.  We have strategies of type $M_i$, for $i \in \omega$, which are responsible for constructing a computable isomorphism between $\A$ and $\M_i$ when $\A \cong \M_i$.  Finally, we have a global strategy $G$ which is responsible for ensuring that the $S_n$ are as described.

We arrange the $N_r^a$, $N_{r,n}^a$, $U_{i,e}$ and $M_i$ in some priority ordering of type $\omega$, such that $N_r^a$ occurs in the ordering before any $N_{r,n}^a$, and each $N_{r,n}^a$ occurs before $N_{r,n+1}^a$.  The set of possible outcomes for an $M_i$-strategy is again $\{k, \infty_k : k \in \omega\} \cup \{\infty_\infty\}$.  $N_r^a$-strategies have only a single outcome: $\outcome$.  The set of possible outcomes for an $N_{r,n}^a$-strategy is $\{k : k \in \omega\} \cup \{\infty\}$.  The set of possible outcomes for a $U_{i,e}$-strategy is $\{0, 1\}$.

We will construct a priority tree of these strategies, but this tree will not be the simplest tree, where each level is devoted to a given strategy type according to the priority ordering.  Instead, we will need to dynamically construct our tree during the construction.  We will say more about this later.

\smallskip

{\em Auxiliary functions.}  During the construction, we will be defining various $f_i$ and $g_j$.  Each will be begun by a mother node of some $N_r^a$-type, and then will be continued by nodes of $N_{r, n}^a$-type but also by nodes of $U_{i,e}$-type.  The $N_{r,n}^a$-nodes will inherit a string from an ancestor strategy which they will then extend, while the $U_{i,e}$-nodes may steal strings from descendant strategies.  To track these strings, we will define an auxiliary function $\sigma$.

For $\tau$ an $N_r^a$-strategy, $\sigma(\tau) = \seq{i}$ will be the beginning of the $f_i$ or $g_i$ that $\tau$ creates.  For $\tau$ an $N_{r,n}^a$-strategy and $\beta$ an outcome of $\tau$, $\sigma(\tau, \beta)$ will be the string which $\tau$ provides to strategies extending $\tau\cat\beta$.

Finally, for $\tau$ a $U_{i,e}$-strategy, if $\tau$ takes outcome $1$, we will define $\sigma(\tau, \theta)$ for each $\theta \subset \tau$ such that $\theta$ is a mother node beginning $f_i$, or $\theta$ is a mother node beginning $g_j$ with $j < i$.  These will be the strings which $N_{r,n}^a$-strategies extending $\tau\cat 1$ inherit, if they are daughters of such a $\theta$.

Also, for $\tau$ an $N_r^a$-strategy, we will define $v(\tau) = i$ for the $i$ such that $\tau$ is beginning $f_i$ or $g_i$.

Finally, if a $U_{i,e}$-strategy $\tau$ steals strings from its descendants, we will define $\ell(\tau)$ to be the length of the longest stolen string.  This will be used in building the priority tree, later.

\smallskip

{\em Choosing a pair.}  During the construction, pairs $(\sigma, a) \in \omega^{<\omega} \times\{0, 1\}$ will be chosen by strategies; we will say that $\tau$ {\em chooses} $(\sigma, a)$ when it declares that $(\sigma, a)$ is growing (see below).  This is similar to strategies choosing strings in the previous construction.  However, unlike the previous construction, a strategy may choose more than one pair.  Also, a pair may be chosen by a strategy and then later chosen by another strategy.  When this happens, the first strategy will never again be visited, and so we may think of it as the pair being transferred from the first strategy to the second.  As in the previous strategy, if a pair $(\sigma, a)$ with $\sigma \in s^{<s}$ has not been chosen by any strategy by the start of stage $s$, it will never be chosen by any strategy.  Unlike the previous construction, the pairs $(\seq{}, a)$ are never chosen, and so we will handle them explicitly in $G$.

\smallskip

{\em Growing a pair.} At a stage $s$, a strategy may declare that the pair $(\sigma, a)$ is growing.  When this happens, we let $m$ be largest such that we have already declared $S_m( (\emptyset, \sigma, a) )$ (or $m = -1$ if no such declarations have been made).  We declare $S_{k}( (\emptyset, \sigma, a))$ for $0 \le k < m+2$.  We also declare $S_{k}( (F, \sigma, a))$ for every $F \subseteq s$ and $0 \le k < m$.

\smallskip

{\em Strategy for $G$.} Our action is much the same as the action for the global strategy in \Cref{thm:cc_high_Scott}.  At the end of stage $s$, for every $\sigma \in s^{<s}$ and every $a < 2$, declare $S_0( (\emptyset, \sigma) )$.  Also, at the end of every stage, grow the pairs $(\seq{}, 0)$ and $(\seq{}, 1)$.

\smallskip

{\em Strategy for $N_r^a$.} Suppose $\tau$ is an $N_r^a$-strategy.  At the first stage when $\tau$ is visited, an $i \in \omega$ larger than any number previously mentioned in the construction is chosen.  $\tau$ makes the definitions $\sigma(\tau) = \seq{i}$, $v(\tau) = i$.

At every stage when $\tau$ is visited, grow the pair $(\seq{i}, a)$.

$\tau$ always takes the outcome $\outcome$.  

\smallskip

{\em Strategy for $N_{r,n}^a$.}  Suppose $\tau$ is an $N_{r,n}^a$-strategy visited at stage $s$.  Let $\theta$ be the mother node for $\tau$, i.e.\ the unique $\theta \subset \tau$ which is an $N_r^a$-strategy.  Let $\rho \subset \tau$ be largest such that one of the following holds:
\begin{itemize}
\item $\rho = \theta$.  In this case, let $\gamma = \sigma(\theta)$.
\item $\rho$ is an $N_{r,n-1}^a$-strategy.  In this case, let $\alpha$ be the outcome such that $\rho\cat\alpha \subseteq \tau$, and let $\gamma = \sigma(\rho, \alpha)$.
\item $a = 0$ and $\rho$ is a $U_{i,e}$-strategy with $i = v(\theta)$.  In this case, let $\gamma = \sigma(\rho, \theta)$.
\item $a = 1$ and $\rho$ is a $U_{i,e}$-strategy with $i > v(\theta)$.  In this case, let $\gamma = \sigma(\rho, \theta)$.
\end{itemize}
We will argue in the verification that $|\gamma| = n$.

Let $k$ be the number of times $\tau$ has previously taken outcome $\infty$, and let $t<s$ be the last stage at which $\tau$ took outcome $\infty$ (or $t = 0$ if there is no such stage).  If there is a $q \in [t, s)$ such that $\phi(n, q)$ holds, we will take outcome $\infty$ at stage $s$; otherwise, we will take outcome $k$.

Regardless, let $\beta$ be the outcome we are taking at stage $s$.  If $\sigma(\tau, \beta)$ is not already defined, we define $\sigma(\tau, \beta) = \gamma \cat s$.  We grow the pair $(\sigma(\tau,\beta), a)$ and then take outcome $\beta$.

\smallskip

{\em Strategy for $U_{i,e}$.}  Suppose $\tau$ is a $U_{i,e}$-strategy visited at stage $s$.  If there is no $\theta \subset \tau$ an $N_r^0$-strategy with $v(\theta) = i$, then $\tau$ never acts and simply takes outcome 0 at every stage.

Suppose instead there is such a $\theta$.  Let 
\[
C_\tau = \{\theta\} \cup \{ \psi \subset \tau : \text{$\psi$ is an $N_r^1$-strategy with $v(\psi) < i$}\}.
\]
By the recursion theorem, there is some $x_\tau \in \omega$ such that we can control the enumeration of $x_\tau$ into $\emptyset'$.  We initially keep $x_\tau$ out of $\emptyset'$.

At stage $s$, we check if there is a set $\{\pi_\psi, \alpha_\psi : \psi \in C_\tau\}$ such that:
\begin{itemize}
\item Each $\pi_\psi$ extends $\tau\cat 0$ and is a daughter of $\psi$;
\item Each $\alpha_\psi$ is an outcome of $\pi_\psi$;
\item $\sigma(\pi_\psi, \alpha_\psi)$ is defined for every $\psi$; and
\item $\Phi_{e,s}(\bigoplus_{\psi \in C_\tau} \sigma(\pi_\psi,\alpha_\psi); x_\tau)\converge = 0$.
\end{itemize}
If there is no such set, $\tau$ takes outcome $0$.

If there is, we fix such a set $\{\pi_\psi, \alpha_\psi : \psi \in C_\tau\}$.  We will henceforth always take outcome $1$, and we will always use this set.  We enumerate $x_\tau$ into $\emptyset'$ and define $\sigma(\tau, \psi) = \sigma(\pi_\psi,\alpha_\psi)$ for each $\psi \in C_\tau$.  We also define $\ell(\tau) = \max\{ |\sigma(\tau,\psi)| : \psi \in C_\tau \}$.

If $\tau$ is being visited at stage $s$ after having fixed a set, we grow $(\sigma(\tau,\theta), 0)$ and $(\sigma(\tau,\psi), 1)$ for $\psi \in C_\tau - \{\theta\}$, and then take outcome 1.

\smallskip

{\em Notation.}  For notational convenience, if $\tau$ is an $N_r^a$-strategy, we define $a(\tau) = a$.

\smallskip

{\em Strategy for $M_i$.} This is mostly as in the proof of \Cref{thm:cc_high_Scott}, with the obvious changes for pairs $(\sigma, a) \in \omega^{<\omega}\times\{0,1\}$ rather than strings $\sigma \in \omega^{<\omega}$.

$C_\tau$ is defined to consist of those pairs $(\sigma,a)$ such that there is a $\rho \subset \tau$ for which one of the following holds:
\begin{itemize}
\item $\rho$ is an $N_r^a$-strategy and $\sigma = \sigma(\tau)$.
\item $\rho$ is an $N_{r,n}^a$-strategy, $\rho\cat \alpha \subseteq \tau$ and $\sigma = \sigma(\rho,\alpha)$.
\item $\rho$ is a $U_{i,e}$-strategy with $\rho\cat 1 \subseteq \tau$, and there is $\psi \in C_\rho$ with $a = a(\psi)$ and $\sigma = \sigma(\rho,\psi)$.
\end{itemize}
It is important to note that this is finite and is completely determined by the first stage at which $\tau$ is visited.

If $t < s$ is the last stage at which $\tau$ took an infinite outcome and $k_0$ is the number of times $\tau$ has taken an infinite outcome, $B_\tau(s)$ consists of every pair $(\sigma, a) \in t^{<t} \times \{0,1\}$ except for those in $C_\tau$ and those chosen by a strategy extending $\tau\cat k_0$.

Otherwise, this is as in the proof of \Cref{thm:cc_high_Scott}. 

\smallskip

{\em Some auxiliary definitions.} For a mother strategy $\psi$ and a $\tau \supset \psi$, we let $n(\tau, \psi)$ be the largest $n$ such that $\psi$ has an $N_{r,n}^a$-daughter strategy along $\tau$, or $n(\tau,\psi) = 0$ if there is no such $n$.

Suppose $\psi$ is an $N_r^a$-strategy, and $\tau \supset \psi$ is a $U_{i,e}$-strategy with $\psi \in C_\tau$.  For every $n$ with $n(\tau,\psi) < n < |\sigma(\tau,\psi)|$, $\tau$ has defined $f_i(n)$ or $g_j(n)$ for the $f_i$ or $g_j$ started by $\psi$.  In this case, we say that $\tau$ {\em blocks} $N_{r,n}^a$.  Observe that a given $\tau$ can only block finitely many $N_{r,n}^a$.

\smallskip

{\em Running the construction and the priority tree.}  As mentioned, our priority tree will be constructed dynamically.  Our reason for doing this is that a $U_{i,e}$-strategy along the true path may define $f_i$ on some interval $I$.  To maintain the desired property of $f_i+g_j$, we must ensure that $g_j(x)$ is defined by $N$-strategies, for every $j > i$ with $j < x$ and $x \in I$.  So the appropriate $N$-strategies must occur on the true path before the next $U$-strategy.  Similar concerns about the definition of $f_i$ apply.  Also, if a $U_{i,e}$-strategy defines $f_i(n)$, then we cannot have an $N$-strategy which seeks to define $f_i(n)$.

At stage $s$, we begin by visiting the strategy $\seq{}$.  After having visited a strategy $\tau$ at stage $s$ with $|\tau| < s$, we let $\alpha$ be the outcome taken by $\tau$ at this stage.  We next visit the strategy $\tau\cat \alpha$. After having visited a strategy $\tau$ with $|\tau| =s$, we run the global strategy and end the stage.

The first time a given node on the priority tree is visited, we will determine what type of strategy it is.  This will determine what outcomes it has, and thus what children it has on the priority tree.  But we will not decide what type each of the children is until they are visited.

Suppose we are visiting node $\tau$ for the first time.  We make $\tau$ a $\zeta$-strategy, where $\zeta$ is least in the priority ordering subject to the following constraints:
\begin{itemize}
\item There can be no $\rho \subset \tau$ which is a $\zeta$-strategy;
\item $\tau$ cannot be an $N_{r,n}^a$-strategy if there is a $U_{i,e}$-strategy $\rho$ with $\rho\cat 1 \subseteq \tau$ and $\rho$ blocks $N_{r,n}^a$.
\item If there is a $U_{i,e}$-strategy $\rho$ with $\rho\cat 1\subseteq \tau$, and $\psi \subset \rho$ is a mother node, then $\tau$ cannot be a $U_{i',e'}$-strategy if this would make $n(\tau,\psi) \le \ell(\rho)$.
\end{itemize}

We put the lexicographic ordering on the tree of strategies, where we order the outcomes of $M_i$ as $\infty_\infty < \dots < \infty_2 < \infty_1 < \infty_0 < \dots < 2 < 1 < 0$, we order the outcomes of $N_{r,n}^a$ as $\infty < \dots < 2 < 1 < 0$, and we order the outcomes of $U_{i,e}$ as $1 < 0$.  Observe that our construction has the property that if a strategy $\tau$ is visited, and then at a later stage a strategy $\rho$ which is lexicographically to the left of $\tau$ is visited, $\tau$ can never again be visited.

\subsection*{The true path} We define the true path recursively, maintaining the inductive hypothesis that every strategy on the true path is visited infinitely often.  $\seq{}$ is on the true path.  If $\tau$ is on the true path, let $\alpha$ be the leftmost outcome which $\tau$ takes infinitely often.  By our actions for the various strategies, this necessarily exists.  Then $\tau\cat \alpha$ is on the true path.  We call $\alpha$ the {\em true outcome} of $\tau$.

We let $TP$ denote the true path.

By our earlier observation, no strategy off the true path is visited infinitely often.  Observe that the $TP$ is computable from $\zerojj$.

\subsection*{Verification}We begin by showing that our dynamic construction did not omit any strategies we did not intend to.  We say that $N_{r,n}^a$ is {\em blocked on the true path} if there is a $U_{i,e}$-strategy $\rho$ with $\rho\cat1 \in TP$ and $\rho$ blocks $N_{r,n}^a$.

\begin{claim}
For every strategy type $\zeta$ which is not blocked on the true path, there is a $\zeta$-strategy on the true path.
\end{claim}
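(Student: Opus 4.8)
\emph{Plan.} I would prove the claim by induction on the position of $\zeta$ in the fixed priority ordering (which has order type $\omega$), after first establishing a lemma about the true path. Two elementary facts will be used repeatedly. First, each strategy type is assigned to at most one node of $TP$: if $\tau_0 \subsetneq \tau_1$ were both on $TP$ and both $\zeta$-strategies, then the first constraint in the dynamic type assignment would have been violated when $\tau_1$ was first visited. Second, call a type $\eta$ \emph{eventually available} if there is $\tau^{*} \in TP$ such that none of the three constraints excludes $\eta$ at any node of $TP$ properly extending $\tau^{*}$. Then every eventually available type occurs on $TP$: otherwise every sufficiently long node of $TP$ is assigned a type that precedes or equals $\eta$ in the priority ordering, and there are only finitely many such types but infinitely many such nodes.

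\emph{Preliminary lemma: for every mother strategy $\psi$ on $TP$, $n(\tau,\psi)\to\infty$ as $\tau$ ranges over $TP$.} I would split on whether there are finitely or infinitely many $U$-strategies on $TP$ that take true outcome $1$ and extend $\psi$. If there are infinitely many, then the third constraint --- applied at the first visit to each such strategy --- forces $n(\cdot,\psi)$ to exceed $\ell$ of the previous one, so there are infinitely many daughters of $\psi$ on $TP$; since distinct daughters of $\psi$ are distinct strategy types and hence have distinct indices, their indices are unbounded. If there are only finitely many, then past the last of them no daughter of $\psi$ is ever blocked on $TP$ (to block a daughter of $\psi$, a $U$-strategy must have $\psi \in C_\rho$, hence must extend $\psi$); so if $\psi$ had only finitely many daughters on $TP$, there would be a suitably large index $m^{*}$ for which the daughter $N_{r,m^{*}}^{a}$ of $\psi$ (writing $\psi$ of type $N_r^a$) is neither blocked on $TP$ nor occurs on $TP$. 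But such a type is eventually available --- the first constraint is harmless because it is not on $TP$, the second because it is not blocked on $TP$, the third because it is not a $U$-strategy --- contradicting the first case of the lemma.

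Now for the induction. Assume $\zeta$ is not blocked on $TP$, and suppose for contradiction that no node of $TP$ is a $\zeta$-strategy. By the inductive hypothesis, every type preceding $\zeta$ that is not blocked on $TP$ already occurs on $TP$; and every type preceding $\zeta$ that \emph{is} blocked on $TP$ --- necessarily some $N_{r,n}^a$ --- comes with a blocking $U$-strategy $\rho$ with $\rho\cat 1 \in TP$. Fix $\tau^{*} \in TP$ extending all of these finitely many witnesses. At every node of $TP$ properly extending $\tau^{*}$, the first constraint excludes every type preceding $\zeta$ that occurs on $TP$, and the second constraint excludes every type preceding $\zeta$ that is blocked on $TP$, so no type preceding $\zeta$ is available. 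Moreover the first constraint cannot exclude $\zeta$ itself (its hypothesis would put a $\zeta$-strategy on $TP$), and if $\zeta = N_{r,n}^a$ the second constraint cannot exclude $\zeta$ either (its hypothesis would make $\zeta$ blocked on $TP$). So unless $\zeta$ is a $U$-type, $\zeta$ is eventually available, hence occurs on $TP$, a contradiction.

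The remaining case, $\zeta = U_{i,e}$, is the one I expect to be the main obstacle: the third constraint can genuinely exclude $\zeta$ at arbitrarily deep nodes, so I cannot simply argue that $\zeta$ is eventually available. Instead I would argue self-referentially. If some $U$-strategy $\rho$ with true outcome $1$ is placed on $TP$ strictly past $\tau^{*}$, then, since $\rho$ was in fact assigned a $U$-type, the third constraint did not exclude $U$-types when $\rho$ was first visited; that is, $n(\rho,\psi') > \ell(\rho^-)$ for every earlier true-outcome-$1$ $U$-strategy $\rho^-$ with $\rho^-\cat 1 \subseteq \rho$ and every mother $\psi' \subset \rho^-$. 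But that is precisely the condition for the third constraint not to exclude $\zeta$ at $\rho$, so --- all preceding types being excluded --- $\rho$ would have been assigned $\zeta$, contradicting that $\zeta$ is not on $TP$. If instead no true-outcome-$1$ $U$-strategy is placed on $TP$ past $\tau^{*}$, then the collection of pairs $(\rho^-,\psi')$ relevant to the third constraint is fixed and finite, with every $\psi'$ on $TP$; so by the preliminary lemma $n(\tau,\psi') > \ell(\rho^-)$ for all of them once $\tau \in TP$ is long enough, whence $\zeta$ is eventually available and occurs on $TP$, again a contradiction. This completes the induction and the proof.
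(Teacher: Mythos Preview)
Your argument is correct and takes essentially the same approach as the paper: a least counterexample $\zeta$, the observation that for non-$U$ types none of the three constraints can exclude $\zeta$ along $TP$, and for $U$-types the dichotomy that either some $U$-strategy on $TP$ past the chosen point should have been assigned $\zeta$ instead, or else no further outcome-$1$ $U$-strategies appear and the third constraint is eventually vacated because $n(\tau,\psi)$ grows. Your version is in fact more carefully organized than the paper's (your explicit $\tau^{*}$ extending both the witnessing strategies and the blocking $\rho\cat 1$'s, together with the ``eventually available'' device, fills in details the paper's one-line choice of $\rho$ glosses over), and your preliminary lemma is precisely what the paper records as a separate claim immediately afterward.

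One small remark: Case~1 of your preliminary lemma asserts that $n(\rho_{k+1},\psi)>\ell(\rho_k)$ forces infinitely many daughters, but this inference needs $\ell(\rho_k)\to\infty$ (or at least $\ell(\rho_k)\geq n(\rho_k,\psi)$), which you do not argue; a priori the daughters witnessing $n(\rho_{k+1},\psi)>\ell(\rho_k)$ could all lie below $\rho_k$ already. This does not affect your overall proof, however, since your main induction only ever invokes the lemma in the Case~2 setting: in your Case~B each relevant mother $\psi'$ lies below $\tau^{*}$ and hence has only finitely many outcome-$1$ $U$-strategies above it on $TP$.
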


\begin{proof}
Towards a contradiction, let $\zeta$ be the least type in the priority ordering which does not occur on the true path and is not blocked.  Fix $\rho$ on the true path such that all types before $\zeta$ occur along $\rho$.  If $\zeta$ is not of type $U_{i,e}$, then since $\zeta$ is not blocked, the next strategy along the true path must be of type $\zeta$.

If instead $\zeta$ is of type $U_{i,e}$, then since there is no $\zeta$-strategy on the true path, there can be no $U_{i',e'}$-strategy on the true path for any $U_{i',e'}$ later in the priority ordering than $\zeta$.  So no strategy along the true path extending $\rho$ will have type $U_{i',e'}$.  So only finitely many $N_{r,n}^a$ are blocked.  So for sufficiently long $\tau$ on the true path, the relevant $n(\tau, \psi)$ will have grown sufficiently large that $\zeta$ is no longer constrained from being chosen for the next strategy, contrary to our choice of $\zeta$.
\end{proof}

\begin{claim}
If $\tau$ is an $N_{r,n}^a$-strategy on the tree of strategies, then the $\gamma$ defined in the action for $\tau$ has length $n$.
\end{claim}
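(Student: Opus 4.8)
The plan is to prove the claim by induction, taking as primary measure the stage at which $\tau$ is first visited and as secondary measure $|\tau|$. Fix an $N_{r,n}^a$-strategy $\tau$ with mother node $\theta$, and let $\rho \subset \tau$ be the longest node satisfying one of the four bullets in the action for $\tau$; such a $\rho$ exists since $\theta$ itself always satisfies the first bullet. Before analysing which bullet $\rho$ satisfies, I would record two structural facts. First, there is at most one $N_r^a$-strategy on any branch, so every $N_{r,m}^a$-strategy $\subset\tau$ is in fact a daughter of $\theta$; and since the priority ordering places $N_{r,m}^a$ before $N_{r,m+1}^a$, the rule ``$\zeta$ least not already present and not forbidden'' forces the $N_{r,m}^a$-daughters of $\theta$ to appear along the branch of $\tau$ in increasing order of $m$, the only omissions being those $m$ blocked by some $U$-strategy $\rho'$ with $\rho'\cat 1 \subseteq \tau$. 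Second, if $\psi \subset \rho' \subset \tau$ with $\rho'$ a $U_{i,e}$-strategy, $\rho'\cat 1 \subseteq \tau$, and $\psi$ an $N_r^a$-strategy lying in $C_{\rho'}$, then unwinding the definition of $C_{\rho'}$ gives $\psi = \theta$ together with $i = v(\theta)$ (when $a=0$) or $i > v(\theta)$ (when $a=1$); in particular $\rho'$ is then a candidate of the third or fourth bullet for $\tau$.

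From these facts I would first carry out the reduction. Suppose $n > 1$, so $N_{r,n-1}^a$ is a genuine strategy type; since $\tau$ received type $N_{r,n}^a$ rather than the earlier type $N_{r,n-1}^a$, this earlier type must already have appeared above $\tau$ --- hence as a daughter of $\theta$ which is an ancestor of $\tau$, a second-bullet candidate --- or else be blocked below $\tau$, in which case the blocking $U$-strategy is a third- or fourth-bullet candidate by the second structural fact. Either way there is a candidate strictly extending $\theta$, so $\rho \neq \theta$. If instead $n = 1$, the second bullet is vacuous (there is no $N_{r,0}^a$), and a qualifying $U$-strategy candidate $\rho'$ with $\rho'\cat 1 \subseteq \tau$ is impossible: such a $\rho'$ would hand down the string $\sigma(\pi_\theta,\alpha_\theta)$ stolen from an $N_{r,n'}^a$-daughter $\pi_\theta$ of $\theta$ below $\rho'\cat 0$, which (as in the $U$-strategy case below, $\pi_\theta$ is first visited before $\tau$, so the induction hypothesis applies) has length $n'+1 \ge 2$, and $\rho'$ would therefore block $N_{r,1}^a$ below $\rho'\cat 1$ (one checks $n(\rho',\theta)=0$ since $N_{r,1}^a$ is not an ancestor of $\tau$), contradicting that $\tau$ has type $N_{r,1}^a$. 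So in the case $n=1$ we get $\rho = \theta$, $\gamma = \sigma(\theta) = \seq{v(\theta)}$, and $|\gamma| = 1 = n$.

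For $n > 1$ there are two cases. If $\rho$ is an $N_{r,n-1}^a$-daughter of $\theta$ with $\rho\cat\alpha \subseteq \tau$, then $\gamma = \sigma(\rho,\alpha) = \gamma_\rho\cat s$ by the action for $\rho$, where $s$ is the first stage at which $\rho$ takes outcome $\alpha$; since $\rho \subset \tau$ is first visited no later than $\tau$ and has strictly smaller length, the induction hypothesis gives $|\gamma_\rho| = n-1$ and hence $|\gamma| = n$. If instead $\rho$ is a $U_{i,e}$-strategy, then by the second structural fact $\theta \in C_\rho$, $\rho\cat 1 \subseteq \tau$, and $\gamma = \sigma(\rho,\theta) = \sigma(\pi_\theta,\alpha_\theta)$, where $\pi_\theta$ is an $N_{r,n'}^a$-daughter of $\theta$ extending $\rho\cat 0$. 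The crucial observation is that $\pi_\theta$ is first visited strictly before $\tau$: nodes extending $\rho\cat 0$ are visited only at stages before $\rho$ first takes outcome $1$ (the outcome $1$ of a $U$-strategy is permanent and lies to the left of its outcome $0$), whereas $\tau$ extends $\rho\cat 1$ and so is first visited no earlier than the stage at which $\rho$ fixes its set. Thus the primary induction hypothesis applies to $\pi_\theta$, giving $|\gamma_{\pi_\theta}| = n'$ and so $|\gamma| = |\sigma(\pi_\theta,\alpha_\theta)| = n'+1$. It remains to see $n = n'+1$: since $|\sigma(\rho,\theta)| = n'+1$, the strategy $\rho$ blocks exactly $N_{r,m+1}^a,\dots,N_{r,n'}^a$ below $\rho\cat 1$, where $m = n(\rho,\theta)$, while $N_{r,1}^a,\dots,N_{r,m}^a$ are ancestors of $\tau$; as $\tau$ is an $N_{r,n}^a$-strategy with $\rho\cat 1 \subseteq \tau$ that is neither blocked by $\rho$ nor one of those ancestors, $n \geq n'+1$. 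And if $n > n'+1$ then, by maximality of $\rho$ there is no further qualifying $U$-strategy between $\rho\cat 1$ and $\tau$ (such a strategy would be a candidate strictly extending $\rho$), so the daughters of $\theta$ resume below $\rho\cat 1$ in order $N_{r,n'+1}^a, N_{r,n'+2}^a,\dots$, whence $N_{r,n-1}^a$ appears as an ancestor of $\tau$ and is a second-bullet candidate strictly extending $\rho$, again contradicting maximality. Hence $n = n'+1 = |\gamma|$, completing the induction.

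I expect the main obstacle to be this last step --- the blocking bookkeeping, that is, pinning down exactly which $N_{r,m}^a$ are blocked below $\rho\cat 1$ and which reappear there --- and, closely related, checking that the recursion is well-founded: the stolen string $\sigma(\pi_\theta,\alpha_\theta)$ must come from a strategy processed strictly earlier, which is not obvious since $\pi_\theta$ is not an initial segment of $\tau$. Once the priority-tree constraints governing blocking are unpacked, the reduction to the two cases and the base case $n=1$ are routine.
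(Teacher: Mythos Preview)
Your argument is correct and amounts to a careful unpacking of what the paper records only as a one-line sketch (``by induction on $n$ and our action for the $N_{r,n}^a$-strategies, as well as our priority ordering and the dynamic construction''). Your choice to induct on first-visit stage (with length as tiebreaker) rather than on $n$ is the right move: in the $U$-strategy case you must invoke the hypothesis on $\pi_\theta$, which lies below $\rho\cat 0$ and is not an ancestor of $\tau$, so well-foundedness is not immediate from $n$ alone but is immediate from the observation that $\pi_\theta$ is visited before $\rho$ ever takes outcome $1$.

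One small imprecision: when you write ``$N_{r,1}^a,\dots,N_{r,m}^a$ are ancestors of $\tau$,'' some of these may instead be blocked by qualifying $U$-strategies strictly above $\rho$; but your own first structural fact already gives that each such $k$ is an ancestor \emph{or} blocked, which is all you need for the inequality $n\ge n'+1$. The rest of the bookkeeping---that $\rho$ blocks exactly $\{m+1,\dots,n'\}$, that no qualifying $U$-strategy lies between $\rho\cat 1$ and $\tau$ by maximality, and hence that an $N_{r,n-1}^a$-daughter would have to reappear below $\rho\cat 1$ if $n>n'+1$---is handled correctly.
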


\begin{proof}
By induction on $n$ and our action for the $N_{r,n}^a$-strategies, as well as our priority ordering and the dynamic construction of the tree of strategies.
\end{proof}

\begin{claim}
If $\psi$ is a mother node on the true path, then $\psi$ has infinitely many daughter nodes on the true path.
\end{claim}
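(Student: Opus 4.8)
Say $\psi$ is an $N_r^a$-strategy. The plan is to reduce the claim to the assertion that $n(\tau,\psi)$ is unbounded as $\tau$ ranges over $TP$, and then to derive a contradiction from the dynamic construction of the priority tree if it were bounded, using the preceding claim that every strategy type not blocked on the true path occurs on the true path.

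I would first record two bookkeeping facts. Since $N_r^a$-types are never blocked, that claim gives an $N_r^a$-strategy on $TP$, and it is unique because the construction never assigns a node a type already held by a proper ancestor; so $\psi$ is \emph{the} $N_r^a$-strategy on $TP$. Consequently every $N_{r,n}^a$-strategy $\pi$ on $TP$ has $\psi$ as mother node — the mother of $\pi$ is by definition the unique $N_r^a$-strategy $\subset\pi$, which lies on $TP$ — and so $\pi$ is a daughter of $\psi$. Moreover, if $n(\tau,\psi)=m>0$ then by definition $\psi$ has an $N_{r,m}^a$-daughter along $\tau$, which lies on $TP$ when $\tau$ does. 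Combining, $\psi$ has infinitely many daughters on $TP$ if and only if $\{n(\tau,\psi):\tau\in TP\}$ is unbounded, and it is the latter I would prove.

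So suppose toward a contradiction that $n(\tau,\psi)\le M$ for every $\tau\in TP$. For each $n>M$ there can be no $N_{r,n}^a$-strategy on $TP$, as it would be a daughter of $\psi$ of index $n$ and would force $n(\tau,\psi)\ge n$ at its node; so by the preceding claim $N_{r,n}^a$ is blocked on $TP$, witnessed by some $U$-strategy $\rho$ with $\rho\cat 1\in TP$. Since $\rho\in TP$, the mother node figuring in this blocking is an $N_r^a$-strategy on $TP$, hence is $\psi$, so $\psi\in C_\rho$. Call a $U$-strategy $\rho$ with $\rho\cat 1\in TP$ and $\psi\in C_\rho$ \emph{$\psi$-relevant}. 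By the observation that a single strategy blocks only finitely many $N_{r,n}^a$, there must be infinitely many $\psi$-relevant $U$-strategies on $TP$; enumerate them $\rho_0\subset\rho_1\subset\cdots$.

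The key step — and the one I expect to be the main obstacle — is to extract the needed inequality from the third clause in the definition of the priority tree. For each $k$ we have $\rho_k\cat 1\subseteq\rho_{k+1}$ (since $\rho_k\cat 1\in TP$ and $\rho_k\subsetneq\rho_{k+1}$ both lie on $TP$) and $\psi\subset\rho_k$ is a mother node, so that clause, applied when the type of $\rho_{k+1}$ was chosen, forces $n(\rho_{k+1},\psi)>\ell(\rho_k)$. Since $\psi\in C_{\rho_k}$ we have $\ell(\rho_k)\ge|\sigma(\rho_k,\psi)|$, while $\rho_k$ blocks $N_{r,n}^a$ only for $n(\rho_k,\psi)<n<|\sigma(\rho_k,\psi)|$. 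Chaining these, every $n$ blocked by $\rho_k$ satisfies $n<|\sigma(\rho_k,\psi)|\le\ell(\rho_k)<n(\rho_{k+1},\psi)\le M$. Hence no $\rho_k$ blocks any $n>M$; but every $n>M$ is blocked on $TP$ and therefore by one of the $\rho_k$, a contradiction. The remaining points are all routine unwinding of definitions — of $C_\rho$, $\ell$, $\sigma(\cdot,\cdot)$, the blocking relation, and mother/daughter nodes — in particular checking that blocking on $TP$ is always witnessed by a $\psi$-relevant strategy and that the priority-tree clause delivers the strict inequality $n(\rho_{k+1},\psi)>\ell(\rho_k)$ as claimed.
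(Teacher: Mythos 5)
Your proof is correct and takes essentially the same approach as the paper: the paper's own proof is just the one-line observation that the third tree-building constraint forbids selecting a new $U$-strategy until $n(\tau,\psi)>\ell(\rho)$ for every mother $\psi\subset\rho$, and your argument is a careful unwinding of exactly that, combined with the preceding claim that every non-blocked type appears on the true path. No gaps.
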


\begin{proof}
By our dynamic construction of the tree of strategies.  Note that we are not allowed to select a $U_{i,e}$-strategy until $n(\tau, \psi) > \ell(\rho)$ for every mother node $\psi \subset \rho$.
\end{proof}

Let $X$ be the set of $v(\psi)$, for $\psi$ an $N_r^0$-strategy along the true path.  Let $Y$ be the set of $v(\psi)$, for $\psi$ an $N_r^1$-strategy along the true path.  For $\psi$ an $N_r^0$-strategy along the true path with $v(\psi) = i$, define $f_i = \bigcup \sigma(\tau,\alpha)$, where the union ranges over $\tau\cat \alpha \in TP$ with $\tau$ a daughter of $\psi$.  Similarly, for $\psi$ an $N_r^1$-strategy along the true path with $v(\psi) = j$, define $g_j = \bigcup \sigma(\tau,\alpha)$, where the union ranges over $\tau\cat \alpha \in TP$ with $\tau$ a daughter of $\psi$.  By the previous claim, these are elements of $\omega^\omega$.  Further, as $TP$ is $\zerojj$-computable, so are $f_i$ and $g_j$, for $i \in X$, $j \in Y$.

Define $T_0 = \{ \sigma : \exists i \in X\, \sigma \subset f_i\}$ and $T_1 = \{ \sigma : \exists j \in Y\, \sigma \subset g_j\}$.  The argument that the relationship of the $S_n$ to $T_0$ and $T_1$ is as previously described is the same as the corresponding argument in the proof of \Cref{thm:cc_high_Scott}.

The argument that $\A$ is computably categorical is also as in the proof of \Cref{thm:cc_high_Scott}.  The reader might worry that the possibility for a string $\sigma$ to be chosen by multiple strategies could cause trouble.  However, note that the only way in which $\sigma$ can be chosen by multiple strategies is if it is chosen first by some strategy $\pi$, and then later chosen by a $U_{i,e}$-strategy $\rho$ with $\rho\cat 0 \subseteq \pi$.  If $\tau$ is an $\M_i$-strategy, then $\tau$'s action does not depend on precisely which strategy has chosen $\sigma$---rather, it only depends on which outcome of $\tau$'s is extended by the strategy that has chosen $\sigma$.  If $\tau$ is on the true path, and some strategy $\pi$ extending $\tau\cat\alpha$ has chosen $\sigma$, then every strategy which chooses $\sigma$ extends $\tau\cat\alpha$, and so $\tau$ is untroubled by this behavior.

It remains only to show that the $f_i$ and $g_j$ behave as we would like them to.

\begin{claim}
For $i < j < n$ with $i \in X$ and $j \in Y$, at most one of $f_i(n)$ and $g_j(n)$ is defined by some $U_{i,e}$-strategy $\rho$ with $\rho\cat 1$ along the true path.
\end{claim}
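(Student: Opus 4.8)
The argument is by contradiction. Suppose that $f_i(n)$ is defined by a $U$-strategy $\rho$ with $\rho\cat 1 \in TP$ and that $g_j(n)$ is defined by a $U$-strategy $\rho'$ with $\rho'\cat 1 \in TP$. Let $\theta$ be the mother node of $f_i$ and $\psi$ the mother node of $g_j$, so that $\theta \in C_\rho$ and $\rho$ blocks $N^0_{r,n}$ (hence $n(\rho,\theta) < n < |\sigma(\rho,\theta)|$), while $\psi \in C_{\rho'}$ and $\rho'$ blocks $N^1_{r',n}$ (hence $n(\rho',\psi) < n < |\sigma(\rho',\psi)|$). Write $\rho$ as a $U_{i,e}$-strategy and $\rho'$ as a $U_{i',e'}$-strategy; since $\psi \in C_{\rho'}$ we have $v(\psi) = j < i'$, and since $\rho'$ is active there is an $N^0$-mother $\theta_{i'} \subset \rho'$ with $v(\theta_{i'}) = i'$. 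First note that $\rho \neq \rho'$: a $U_{i,e}$-strategy blocks $N^1$-requirements only for the mothers in $C_\rho$, which are $N^1$-mothers with $v$-value strictly less than $i$, whereas $v(\psi) = j > i$; so $\rho$ cannot be the strategy blocking $N^1_{r',n}$. Since $\rho, \rho' \in TP$ they are comparable.

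Two observations drive the proof. First, by the freshness of the numbers chosen by mother strategies, whenever two mother nodes lie on a common branch the one with the smaller $v$-value was visited earlier and so is the higher of the two; from $i < j < i'$ this gives $\theta \subsetneq \psi$, $\theta \subsetneq \theta_{i'}$, and $\psi \subsetneq \theta_{i'}$. Second, the third clause constraining the dynamic construction of the tree says that when a node is declared to be of $U$-type, then for every $U$-strategy $\pi$ with $\pi\cat 1$ an initial segment of that node and every mother node $\mu \subset \pi$, the node must already satisfy $n(\cdot,\mu) > \ell(\pi)$. Since $\ell(\rho) \ge |\sigma(\rho,\theta)| > n$ and $\ell(\rho') \ge |\sigma(\rho',\psi)| > n$, whenever this clause applies to $\rho$ or to $\rho'$ it forces the corresponding value of $n(\cdot,\cdot)$ to exceed $n$.

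Now split into cases on the position of $\rho$ relative to $\rho'$. If $\rho' \subsetneq \rho$, then $\theta \subsetneq \theta_{i'} \subsetneq \rho'$, so $\theta$ is a mother node strictly above the $U$-strategy $\rho'$, and $\rho'\cat 1 \subseteq \rho$ since both lie on $TP$; applying the third clause when $\rho$ was declared a $U$-strategy gives $n(\rho,\theta) > \ell(\rho') > n$, contradicting $n(\rho,\theta) < n$. If instead $\rho \subsetneq \rho'$ and $\psi \subsetneq \rho$, then symmetrically $\psi$ is a mother node strictly above the $U$-strategy $\rho$ with $\rho\cat 1 \subseteq \rho'$, and the third clause applied when $\rho'$ was declared gives $n(\rho',\psi) > \ell(\rho) > n$, contradicting $n(\rho',\psi) < n$. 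The only configuration not covered is $\rho \subsetneq \psi \subsetneq \theta_{i'} \subsetneq \rho'$.

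This last configuration is the crux, and it is precisely the interaction the dynamic tree construction was built to rule out (``we must ensure that $g_j(x)$ is defined by $N$-strategies, for every $j > i$ with $j < x$ and $x \in I$''). Here $\psi$ lies below $\rho$, so the third clause applied to the pair $(\rho,\rho')$ only controls the $N^0$-daughters of $\theta$ and gives us nothing directly about the daughters of $\psi$. To finish I would walk down $TP$ from $\psi$ to $\rho'$ and iterate the third clause along the chain of mother nodes and $U$-strategies encountered: any $U$-strategy $\pi$ with $\psi \subsetneq \pi \subsetneq \rho'$, $\pi\cat 1 \subseteq \rho'$ and $\psi \in C_\pi$ would itself have stolen a string from a descendant of $\psi$ and so would block $N^1_{r',m}$ for all $m < |\sigma(\pi,\psi)|$; if that block reached $n$ we are back in the previous case with $\pi$ in place of $\rho$, and if it did not, then the next $N^1$-daughter of $\psi$ available below $\pi$ still has index below $n$. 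One then argues inductively that, since $\psi$ has infinitely many daughters on $TP$ and $g_j(n)$ must eventually be defined, the construction is not permitted to declare $\rho'$ (or any $U$-strategy below $\psi$) until the $N^1$-daughters of $\psi$ on $TP$ have overtaken index $n$; together with the fact that $i < j < n$ forces $n$ into the range already committed by $\rho$ before $\psi$ enters $TP$, this rules out the configuration. Making this inductive step precise — i.e.\ extracting from the third clause that the $N^1$-daughters of $\psi$ must pass index $n$ before any later $U$-strategy can freeze $g_j(n)$ — is where the real work of the claim lies; the rest is the bookkeeping already carried out in the earlier cases.
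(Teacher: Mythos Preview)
Your proof correctly handles the cases $\rho' \subsetneq \rho$ and $\rho \subsetneq \rho'$ with $\psi \subsetneq \rho$ via the third clause of the dynamic tree construction, but you leave the configuration $\rho \subsetneq \psi$ essentially open, admitting that ``making this inductive step precise \ldots\ is where the real work of the claim lies.'' In fact this last configuration is ruled out in one line by the very freshness principle you already invoked to order the mother nodes. If $\rho\cat 1 \subseteq \psi$, then by the time $\psi$ is first visited, $\rho$ has already acted and in particular has defined $\ell(\rho) \ge |\sigma(\rho,\theta)| > n$; since $\psi$ chooses $j = v(\psi)$ larger than any number previously mentioned in the construction, we get $j > \ell(\rho) > n$, contradicting the hypothesis $j < n$. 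This is precisely the paper's argument (``It cannot be that $\rho\cat 1 \subseteq \psi$, because then $\psi$ would have chosen $j > n$'').

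With that observation, there is no crux case left, and no inductive walk down $TP$ is needed. The paper organises the proof slightly differently---assuming without loss of generality that $f_i(n)$ is the one defined first along $TP$ and showing directly that $g_j(n)$ is then defined by an $N$-daughter---but the content is the same as your cases 1 and 2 once the impossible case $\rho \subset \psi$ is removed. So your proof is not wrong in spirit; it is simply incomplete because you overlooked that freshness compares $j$ not only to the $v$-values of other mother nodes but to \emph{all} numbers mentioned so far, including $\ell(\rho)$.
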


\begin{proof}
Suppose $f_i(n)$ is defined before $g_j(n)$ on the true path, and some $\rho$ a $U_{i,e}$-strategy with $\rho\cat 1$ along the true path defines $f_i(n)$.  Let $\psi$ be the $N_r^1$-strategy with $v(\psi) = j$ and $\psi \in TP$.  It cannot be that $\rho\cat 1 \subseteq \psi$, because then $\psi$ would have chosen $j > n$.  So $\psi\subset \rho$, and thus a daughter $N_{r,n}^1$-strategy is guaranteed to occur on the true path by our dynamic construction of the tree of strategies.

The argument for the case where $g_j(n)$ is first defined is the same.
\end{proof}

To show that $Z \leq_T f_i + g_j$ for each $i < j$, we need only show the following:

\begin{claim}
For all $i < j < n$ with $i \in X$ and $j \in Y$, if $n \not \in Z$, then $\neg \exists s > (f_i + g_j)(n)\, \phi(n, s)$.
\end{claim}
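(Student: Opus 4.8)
The plan is to reduce the claim to an analysis of a single $N_{r,n}^a$-strategy. Note first what is being asserted: since $n \notin Z$ means $\phi(n,\cdot)$ holds for only finitely many stages, the claim says that $(f_i+g_j)(n)$ dominates every such stage. The key structural input is the dichotomy that, for $i < j < n$ with $i \in X$ and $j \in Y$, each of $f_i(n)$ and $g_j(n)$ is defined either by a $U$-strategy $\rho$ with $\rho\cat 1$ on the true path (precisely when the relevant $N_{r,n}^a$ is blocked on the true path) or else by an $N_{r,n}^a$-daughter strategy lying on the true path. Combining this with the preceding claim, which says that at most one of $f_i(n)$ and $g_j(n)$ is defined by a $U$-strategy, at least one of the two values is defined by an $N_{r,n}^a$-daughter on the true path. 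Without loss of generality suppose this is $f_i(n)$, defined by the $N_{r,n}^0$-daughter $\tau \in TP$ of the mother $\psi$ with $v(\psi) = i$ (the case of $g_j(n)$ being symmetric, since the action of $N_{r,n}^a$ does not depend on $a$). As $(f_i+g_j)(n) \ge f_i(n)$, it then suffices to show that every $s$ with $\phi(n,s)$ satisfies $s < f_i(n)$.

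Second I would analyse the outcomes of $\tau$. Let $k$ be the true outcome of $\tau$, and recall that when $\tau$ is visited at stage $s$, writing $t<s$ for the last stage at which it took outcome $\infty$ (with $t=0$ if there was none), $\tau$ takes outcome $\infty$ iff $\phi(n,q)$ holds for some $q \in [t,s)$, and otherwise takes outcome $k'$, where $k'$ is the number of earlier $\infty$-outcomes. Since $n \notin Z$, only finitely many $q$ satisfy $\phi(n,q)$; as each successive $\infty$-outcome of $\tau$ requires such a $q$ from the stage-interval since the previous $\infty$-outcome, and these intervals are pairwise disjoint, $\tau$ takes outcome $\infty$ only finitely often, so $k \in \omega$. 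Let $t$ be the last stage at which $\tau$ takes outcome $\infty$ (so $t=0$ if $k=0$). At every visit stage $s>t$, $\tau$ must take outcome $k$, hence no $q \in [t,s)$ satisfies $\phi(n,q)$; since $\tau$ is visited infinitely often, letting $s \to \infty$ gives that no $q \ge t$ satisfies $\phi(n,q)$. Thus every $s$ with $\phi(n,s)$ has $s<t$.

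Third I would pin down the value $f_i(n)$. By the earlier claim the string $\gamma$ inherited by $\tau$ has length $n$, so every $\sigma(\tau,\beta)$ that $\tau$ defines has the form $\gamma\cat s$ and hence length $n+1$; therefore $f_i(n)$ is the last coordinate of $\sigma(\tau,k)$, which is the stage $s^\ast$ at which $\sigma(\tau,k)$ was first defined, namely the first stage at which $\tau$ takes outcome $k$. Taking outcome $k$ requires exactly $k$ earlier $\infty$-outcomes, the last of which occurs at stage $t$, so $s^\ast > t$, and hence $f_i(n) = s^\ast > t$. Combining with the previous paragraph, every $s$ with $\phi(n,s)$ satisfies $s < t < f_i(n) \le (f_i+g_j)(n)$, which is exactly the claim.

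The step I expect to require the most care is the bookkeeping in the first paragraph: confirming that in the case $i < j < n$ at least one of $f_i(n)$ and $g_j(n)$ is genuinely output by an $N_{r,n}^a$-daughter on the true path rather than stolen by a $U$-strategy, and correctly identifying the mother and daughter involved. Once that is settled, the core observation — that a finite true outcome for $\tau$ forces every witness for $\phi(n,\cdot)$ to appear before the last stage at which $\tau$ guesses $\infty$, which is in turn strictly below $f_i(n)$ — is routine.
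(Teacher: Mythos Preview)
Your proposal is correct and follows essentially the same approach as the paper's proof: both invoke the previous claim to reduce to the case where $f_i(n)$ (or symmetrically $g_j(n)$) is set by an $N_{r,n}^a$-daughter $\tau$ on the true path, observe that $n\notin Z$ forces $\tau$'s true outcome to be some $k\in\omega$, and then note that $f_i(n)$ equals the first stage at which $\tau$ takes outcome $k$, which strictly exceeds every $s$ with $\phi(n,s)$. Your write-up simply unpacks in more detail the paper's one-line assertion that this first stage $s_1$ satisfies $s_1 > s_0$.
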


\begin{proof}
By the previous claim, without loss of generality $f_i(n)$ is defined by some $N_{r,n}^0$-strategy.  Let $\tau$ be the strategy along the true path defining $f_i(n)$.  Let $s_0$ be largest such that $\phi(n, s_0)$.  Then $\tau$ has outcome $k$ for some $k \in \omega$, and it first takes outcome $k$ at some stage $s_1 > s_0$.  So $f_i(n)$ is defined to be some $s \ge s_1$, by construction.
\end{proof}

\begin{claim}
For all $e \in \omega$ and $i \in X$, if $j_0 < j_1 < \dots j_{k-1}$ are the elements of $Y$ less than $i$, then $\emptyset' \neq \Phi_e(f_i \oplus g_{j_0} \oplus \dots \oplus g_{j_{k-1}})$.
\end{claim}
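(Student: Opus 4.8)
The plan is to meet the requirement using the $U_{i,e}$-strategy $\tau$ on the true path $TP$. First I would fix the relevant strategies: for $i \in X$ let $\theta$ be the $N_r^0$-strategy on $TP$ with $v(\theta) = i$, let $\psi_0,\dots,\psi_{k-1}$ be the $N^1$-strategies on $TP$ with $v(\psi_\ell) = j_\ell$, and let $\tau$ be the $U_{i,e}$-strategy on $TP$; this exists because $U_{i,e}$ is never blocked. A preliminary bookkeeping step is to check that $\theta \subset \tau$ and each $\psi_\ell \subset \tau$ --- this follows from the choice of priority ordering together with $v(\theta) = i$ having been chosen fresh only after $\theta$ appeared on $TP$ --- so that, as $TP$ is a path, $C_\tau = \{\theta, \psi_0, \dots, \psi_{k-1}\}$ is exactly the set of mother nodes of $f_i, g_{j_0}, \dots, g_{j_{k-1}}$. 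The goal is then a disagreement between $\Phi_e(f_i \oplus g_{j_0} \oplus \dots \oplus g_{j_{k-1}})$ and $\emptyset'$ at the number $x_\tau$ whose enumeration into $\emptyset'$ is controlled by $\tau$, and I would argue by cases on $\tau$'s true outcome, which is either $0$ or $1$.

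If $\tau$'s true outcome is $0$, then $\tau$ never acts, so $x_\tau \notin \emptyset'$; I would show $\Phi_e(f_i \oplus g_{j_0} \oplus \dots \oplus g_{j_{k-1}}; x_\tau)$ is either divergent --- in which case we are done --- or convergent to a value $\neq 0$. So suppose it converges to $0$ with use $u$. Using the claim that each mother node on $TP$ has infinitely many daughters on $TP$, I would choose, for each $\psi \in C_\tau$, a daughter $\pi_\psi$ of $\psi$ on $TP$ with $\pi_\psi \supseteq \tau\cat 0$ whose outcome $\alpha_\psi$ along $TP$ is late enough that $\sigma(\pi_\psi,\alpha_\psi)$ --- an initial segment of whichever of $f_i, g_{j_0}, \dots, g_{j_{k-1}}$ is begun by $\psi$ --- has length exceeding $u$. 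At any sufficiently late stage $s$ at which $\tau$ is visited, $\tau$ then sees $\Phi_{e,s}\bigl(\bigoplus_{\psi \in C_\tau}\sigma(\pi_\psi,\alpha_\psi); x_\tau\bigr)\converge = 0$ --- the same computation, since this join is an initial segment of $f_i \oplus g_{j_0} \oplus \dots \oplus g_{j_{k-1}}$ reaching past $u$ --- so $\tau$ acts at stage $s$ and takes outcome $1$ forever, contradicting that its true outcome is $0$.

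If $\tau$'s true outcome is $1$, then $\tau$ acts at some least stage $s_0$: it freezes a set $\{\pi_\psi,\alpha_\psi : \psi \in C_\tau\}$ with $\Phi_{e,s_0}\bigl(\bigoplus_{\psi \in C_\tau}\sigma(\pi_\psi,\alpha_\psi); x_\tau\bigr)\converge = 0$, and puts $x_\tau$ into $\emptyset'$. The key step is verifying that each $\sigma(\pi_\psi,\alpha_\psi) = \sigma(\tau,\psi)$ is an initial segment of whichever branch among $f_i, g_{j_0}, \dots, g_{j_{k-1}}$ is begun by $\psi$: having set $\sigma(\tau,\psi)$, the strategy $\tau$ blocks the daughters of $\psi$ exactly on the interval $n(\tau,\psi) < n < |\sigma(\tau,\psi)|$, so the dynamic construction of the priority tree forces the next daughter of $\psi$ on $TP$ below $\tau\cat 1$ to inherit $\sigma(\tau,\psi)$ as its base string and extend it, whence $\sigma(\tau,\psi)$ is an initial segment of $\bigcup\sigma(\pi',\alpha')$ (the union over daughters $\pi'$ of $\psi$ on $TP$), which is that branch. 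Therefore $f_i \oplus g_{j_0} \oplus \dots \oplus g_{j_{k-1}}$ extends the frozen join, the $\Phi_{e,s_0}$-computation persists, and $\Phi_e(f_i \oplus g_{j_0} \oplus \dots \oplus g_{j_{k-1}}; x_\tau)\converge = 0 \neq 1 = \emptyset'(x_\tau)$. In either case the claim follows.

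The step I expect to be the main obstacle is this last verification that the strings stolen by $\tau$ really become initial segments of the corresponding $f_i$ and $g_{j_\ell}$: it requires tracing the clause selection in the definition of the $N^a_{r,n}$-strategies against the blocking mechanism and the dynamic tree construction (and, relatedly, confirming that $C_\tau$ is the intended set of mother nodes), which is precisely the bookkeeping that the ``running the construction'' section was set up to handle.
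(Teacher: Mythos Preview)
Your proposal is correct and follows essentially the same route as the paper: fix the $U_{i,e}$-strategy $\tau$ on the true path, argue that $C_\tau$ consists exactly of the mother nodes for $f_i$ and the relevant $g_{j_\ell}$, and then split on $\tau$'s true outcome to obtain a disagreement at $x_\tau$. The paper's proof is simply terser---it asserts without elaboration that $f_i \supset \sigma(\tau,\theta)$ and $g_{j_m} \supset \sigma(\tau,\psi_m)$ in the outcome-$1$ case---whereas you correctly identify and sketch the bookkeeping (via blocking and the dynamic tree) that underlies this, and you spell out the freshness argument for why $\theta$ and the $\psi_\ell$ lie below $\tau$.
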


\begin{proof}
Fix $\tau$ the $U_{i,e}$-strategy along the true path.  Since $N_r^a$-strategies choose their number large, $i \in X$ implies there is a $\theta \in C_\tau$ with $a(\theta) = 0$, $v(\theta) = i$.  Similarly, for each $m < k$ there is a $\psi_m \in C_\tau$ with $a(\psi_m) = 1$, $v(\psi_m) = j_m$.

If $\tau$ has true outcome 0, then $x_\tau \not \in \emptyset'$, and
\[
\neg [\Phi_e(f_i \oplus g_{j_0} \oplus \dots \oplus g_{j_{k-1}}; x_\tau)\converge = 0],
\]
as otherwise there would be some collection of sufficiently large $\pi_\psi \supseteq \tau\cat 0$ and $\alpha_\psi$ for $\psi \in C_\tau$ with $\pi_\psi\cat\alpha_\psi$ along the true path such that $\Phi_e( \bigoplus_{\psi \in C_\tau} \sigma(\pi_\psi,\alpha_\psi); x_\tau)\converge = 0$, 
and $\tau$ would instead have true outcome 1.

If $\tau$ has true outcome 1, then $x_\tau \in \emptyset'$, $f_i \supset \sigma(\tau, \theta)$ and $g_{j_m} \supset \sigma(\tau, \psi_m)$ for $m < k$, and
\[
\Phi_e( \sigma(\tau, \theta)\oplus \sigma(\tau, \psi_0) \oplus \dots \oplus \sigma(\tau, \psi_{k-1}); x_\tau)\converge = 0
\]
by construction.
\end{proof}

This completes the proof of \Cref{thm:weak_deg_cat}.

\section{Closing thoughts}

In the construction for \Cref{thm:cc_high_Scott}, given a computable tree $T$, we constructed a computably categorical structure and a tree $Q$ which is $\Delta^0_3$-isomorphic to $T$ such that $[Q]$ was coded into the automorphism group of our structure in a particular fashion.  The reason for $T$ and $Q$ being $\Delta^0_3$-isomorphic rather than computably isomorphic is because constructing the isomorphism requires the true path of the construction, and the computable categoricity machinery we used requires a $\Delta^0_3$ true path.  If there were some way to avoid this obstacle and make $T$ and $Q$ computably isomorphic (or even equal), then these methods would be applicable to a wide range of questions.  For example, we would immediately be able to conclude that lowness for categoricity and lowness for isomorphism are the same.

Similarly, the reason $\zerojj$ was chosen as the degree of categoricity in \Cref{thm:weak_deg_cat} was because we again required a $\Delta^0_3$ true path for the computable categoricity machinery, and the true path was necessary to compute the various $f_i$ and $g_j$.  We suspect that, using standard techniques, this could be changed to any $\Sigma^0_2$ degree strictly above $\zeroj$.  We do not know, however, whether the same result could be repeated with $\zeroj$.

\newcommand{\etalchar}[1]{$^{#1}$}

\end{document}